\documentclass[12pt,a4paper,dvipsnames]{article}

\usepackage[utf8]{inputenc}
\usepackage[english]{babel}
\usepackage{amsmath}
\usepackage{amsthm}
\usepackage{amssymb}
\usepackage{geometry}
\usepackage{a4wide}
\usepackage{bm}
\usepackage{microtype}
\usepackage{mathtools}
\usepackage{csquotes}

\usepackage{enumitem}
\setlist[enumerate]{itemsep=0mm,parsep=2mm}

\usepackage{xcolor}
\newcommand\myshade{85}
\colorlet{myurlcolor}{Aquamarine}
\usepackage{hyperref}
\hypersetup{
  linkcolor  = black,
  citecolor  = black,
  urlcolor   = myurlcolor!\myshade!black,
  colorlinks = true,
}
\usepackage[capitalise, noabbrev, nameinlink]{cleveref}
\crefname{equation}{}{}
\usepackage{tikz, graphicx, standalone, caption, subcaption, wrapfig}
\usetikzlibrary{calc,decorations.text}

\definecolor{edgeblack}{rgb}{0.25,0.25,0.25}
\definecolor{vertexblack}{rgb}{0.2,0.2,0.2}

\usepackage{chngcntr}
\usepackage{apptools}
\AtAppendix{\counterwithin{theorem}{subsection}}

\theoremstyle{definition}
\newtheorem{theorem}{Theorem}[section]

\newtheorem{lemma}[theorem]{Lemma}
\newtheorem*{lemma*}{Lemma}
\newtheorem*{conjecture*}{Conjecture}
\newtheorem*{lemma''*}{``Lemma''}
\newtheorem{claim}[theorem]{Claim}
\newtheorem*{claim*}{Claim}

\newtheorem{conjecture}[theorem]{Conjecture}

\begin{document}

\title{\bf Removable trees and matchings in $k$-connected and $k$-edge-connected graphs}

\author{Adam D.W. Clay\thanks{Department of Mathematics, Purdue University, West Lafayette, IN 47907, USA.
e-mail: {\tt adwclay@gmail.com}}\and
Tibor Jord\'an\thanks{Department of Operations Research, ELTE E\"otv\"os Lor\'and University, and the HUN-REN-ELTE Egerv\'ary Research Group
on Combinatorial Optimization, P\'azm\'any P\'eter s\'et\'any 1/C, 1117 Budapest, Hungary.
e-mail: {\tt tibor.jordan@ttk.elte.hu}.}
}

\date{August 4, 2026}

\maketitle

\begin{abstract}
T. Hasunuma (J. Graph Theory, 2023) conjectured that if
$G$ is a $k$-connected (resp. $k$-edge-connected) graph with minimum degree
$\delta(G) \ge k + m - 1$, and $T$ is a tree of order $m$, then
$G$ contains a removable copy of $T$, that is, a
subtree $T'$ isomorphic to $T$ such that $G - E(T')$ is $k$-connected (resp. $k$-edge-connected). We prove (a strengthening of) this conjecture.
We also consider removable matchings in graphs with high minimum degree. We show, among others, that if
$G$ is a $k$-edge-connected graph on at least $2m$ vertices with minimum degree
$\delta(G) \ge k + m$, then there exists a matching $M$ of size $m$ in $G$ for which $G-M$
is $k$-edge-connected. 
\end{abstract}

\section{Introduction}


We consider undirected simple graphs. 
The minimum degree of a graph $G$ is denoted by $\delta(G)$.
Our goal is to show that if $G$ is $k$-(edge-)connected and $\delta(G)$ is large enough,
then $G$ contains a subgraph $H$, of a given type, for which $G-E(H)$ is 
$k$-(edge-)connected. We shall frequently call such a subgraph {\it removable}.
In this paper we focus on the cases when $H$ is a tree, a forest, or a matching of given size.
For a survey on removable subgraph problems see \cite{TM}.

One of our main results is the following theorem concerning removable trees
in $k$-connected or $k$-edge-connected graphs.
The statement of the theorem was conjectured by T. Hasunuma \cite[Conjecture 1.5]{Hasunuma2023}, who also verified the special cases
$k=1,2$. The case $k=3$ has been settled by 
Liu, Liu, and Hong \cite{LLH}, and independently
Yang and Tian \cite{YT}.

\begin{theorem}
\label{thm:Hasunuma conjecture}
	Let $G$ be a $k$-(edge-)connected graph, and let $T$ be a tree of order $m$. If $\delta(G) \ge k + m - 1$, then $G$ contains a subtree $T' \cong T$ such that $G - E(T')$ is $k$-(edge-)connected.
\end{theorem}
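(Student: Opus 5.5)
We prove a stronger statement by induction on $m$, growing the tree one leaf at a time. Call an edge set $F$ \emph{removable} if $G-F$ is $k$-(edge-)connected. The strengthened claim is the following: if $G$ is $k$-(edge-)connected with $\delta(G)\ge k+m-1$, then every tree $T$ of order $m$ has a removable copy $T'$ in $G$. Moreover, we may prescribe that a chosen vertex of $T$ is mapped to any given vertex of $G$. The base case $m=1$ is trivial.

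For the inductive step, choose a leaf $\ell$ of $T$ with neighbour $p$, and set $T_0=T-\ell$. By induction we obtain a removable copy $T_0'$ of $T_0$. Let $H=G-E(T_0')$ and let $p'$ be the image of $p$. We need a neighbour $w$ of $p'$ in $H$ such that $w\notin V(T_0')$ and $H-p'w$ is still $k$-(edge-)connected. Counting gives
\[
d_H(p')\ \ge\ k+m-1-d_{T_0'}(p').
\]
At most $m-1-d_{T_0'}(p')$ of these $H$-neighbours lie in $V(T_0')$, so at least $k$ $H$-neighbours $w$ of $p'$ lie outside $V(T_0')$. The plan is to show that at least one of the edges $p'w$ is removable in $H$. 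That edge extends $T_0'$ to $T'$.

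The key lemma is the following. Let $H$ be $k$-(edge-)connected and let $v$ be a vertex. Let $W$ be a set of neighbours of $v$ with $|W|\ge k$, and suppose every vertex of $W$ has degree at least $k+1$ in $H$. Then some edge $vw$ with $w\in W$ is removable, meaning $H-vw$ is $k$-(edge-)connected.

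\begin{itemize}
\item \emph{Edge version.} If $vw$ is not removable, some tight set $X$ (with $d(X)=k$) separates $v$ from $w$ and contains $w$ but not $v$. Take an inclusionwise minimal such tight set $X_w$ for each $w$. By submodularity and the degree condition $d(w)\ge k+1$ (so $X_w\ne\{w\}$), uncrossing shows that these sets can be chosen to be pairwise disjoint or nested. Then the edges from $v$ into the sets count toward their cut values. Since there are at least $k$ of them, summing the degree conditions yields a contradiction, in the style of Mader's theorem on critical edges.
\item \emph{Vertex version.} The analogue uses Mader's result on cycles of critical edges in $k$-connected graphs: the $k$-critical edges form a forest in a $k$-connected graph. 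Hence among the edges $vw$, $w\in W$, not all can be critical once degrees are large.
\end{itemize}

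The main obstacle is that the vertices $w$ lose degree in $H$. A vertex $w\notin V(T_0')$ keeps its full degree, $d_H(w)=d_G(w)\ge k+m-1\ge k+1$ for $m\ge2$, which is exactly why we restrict to $w$ outside $T_0'$. The harder part is the bound for the vertex-connectivity case.

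Here I would invoke Mader's theorem that every cycle of $k$-critical edges in a $k$-connected graph contains a vertex of degree $k$. Using it together with the fact that the vertices of $W$ have degree at least $k+1$, one still has to rule out a star of critical edges at $v$ alone. For that I would use Mader's stronger structural lemma on fragments: for a critical edge $vw$ there is a fragment containing $w$ and avoiding $v$. Choosing minimal fragments and using $|W|\ge k$ then forces a contradiction. I expect this fragment-uncrossing step to be the technical heart of the proof. If it fails for stars, I would strengthen the induction to keep an extra $H$-neighbour of $p'$ available, for instance by ordering the tree so that degrees at the attachment vertex are controlled.
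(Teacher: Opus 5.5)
\textbf{There is a genuine gap.} Your key lemma is false in both versions, so the inductive step has no support.

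For the edge version, take disjoint cliques $A$ and $B$ on $N\ge k+2$ vertices, and a new vertex $v$ joined to $k$ vertices of $A$ and to $k$ vertices of $B$. This graph is $k$-edge-connected with minimum degree at least $k+1$, and all $2k$ neighbours of $v$ have degree $N\ge k+2$. Yet $d_G(A)=d_G(B)=k$, so every edge at $v$ lies in a $k$-edge-cut and none is removable. This is exactly where your uncrossing sketch breaks. The minimal tight sets $X_w$ for different $w$ may coincide (here $X_w=A$ for every neighbour $w\in A$), and one tight set can absorb up to $k$ edges from $v$. So having $k$, or even $2k$, candidates gives no contradiction.

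For the vertex version, take a set $S$ of $k-1$ vertices adjacent to everything, large disjoint cliques $A_1,\dots,A_{k+1}$, and $v$ joined to $S$ and to one vertex $w_i\in A_i$ for each $i$. This graph is $k$-connected and each $w_i$ has large degree. But each $vw_i$ is critical, since $S$ separates $A_i$ from $v$ in $G-vw_i$. As you observed, Mader's cycle theorem cannot rule this out, because a star of critical edges is a forest. Since $N$ and $|A_i|$ can be taken arbitrarily large, no degree condition on $W$ can rule it out either.

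Consequently, the inductive step as you formulated it (extending an arbitrary removable copy $T_0'$ at $p'$) already fails for $m=2$. In the first example, $T_0'=\{v\}$ is a removable copy of $K_1$ that admits no extension. The same example refutes your ``moreover'' clause about prescribing the image of a vertex. Restricting to $w\notin V(T_0')$, or keeping spare neighbours, does not help, because the obstruction is tight sets, not a shortage of neighbours. (Incidentally, your count actually gives $k+1$ such neighbours, since $V(T_0')-p'$ has only $m-2$ vertices, but that is immaterial.)

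What is missing is a global mechanism that decides where the tree goes. The paper fixes an MA ordering and the certificate $G_k$ of Lemma~\ref{lem:maxback}, so every subgraph of $G'=G-E_k$ is automatically removable. It then grows $T$ leaf by leaf inside $G'$ by a specific rule: the root goes to the rightmost vertex, and each new vertex goes to the rightmost free left neighbour of its parent. Because the ordering remains an MA ordering of $G'$, the MA inequality propagates the invariant $d^-_{G'}(t')\ge m-i+|N^-_{t'}\cap W_i'|$. This guarantees that a free left neighbour always exists. Your leaf-by-leaf skeleton is compatible with this, but removability has to be secured in advance by a fixed certificate rather than re-verified locally at each step.
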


We shall also prove an extension to removable forests with at most four components.

\begin{theorem} \label{thm:removable forests}
	Let $G$ be a $k$-(edge-)connected graph, and let $F$ be a forest of order $m$ with $\omega$ connected components, where $\omega \le 4$. If $\delta(G) \ge k + m - \omega$ and $k \ge \omega - 1$, then $G$ contains a subforest $F' \cong F$ such that $G - E(F')$ is $k$-(edge-)connected.
\end{theorem}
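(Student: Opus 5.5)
We deduce \cref{thm:removable forests} from \cref{thm:Hasunuma conjecture}, or more exactly from the strengthened form we prove for it. The idea is to join the $\omega$ components of $F$ into one tree using the extra $\omega-1$ units of minimum degree. Let $F_1,\dots,F_\omega$ be the components, of orders $m_1,\dots,m_\omega$, so $\sum m_i=m$.

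Suppose first that the tree result is available in a form where the tree is embedded greedily, vertex by vertex. The natural proof of \cref{thm:Hasunuma conjecture} runs this way: each new vertex is attached to an already embedded vertex along an edge whose removal, together with the previous edges, keeps $G$ $k$-(edge-)connected. The degree condition guarantees that a suitable neighbour exists, because at most $m-1$ neighbours are already used and at most $k$ are ``blocked'' by tight cuts. To embed a forest we first place a root $r_i$ of $F_i$ for each $i$ and then grow all components in parallel. We need the roots pairwise distinct, and each later vertex must avoid all previously used vertices. This costs nothing extra: a vertex added to $F_i$ must avoid at most $m-1$ used vertices, exactly as in the tree case.

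The saving of $\omega-1$ in the degree bound has to come from the tree argument itself. At the step where the tree argument uses its full slack, namely the edges incident to the current vertex $v$ that cannot be removed, only the embedded edges count, and a forest has $m-\omega$ edges rather than $m-1$. So I would redo the counting with $e(F)=m-\omega$. When $v$ needs a new neighbour in $G-E(F')$, it has degree at least $\delta-(\text{used edges at } v)$. At most $|V(F')|-1$ vertices are excluded, and we need a free neighbour $u$ such that deleting $vu$ keeps $k$-connectivity.

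The step I expect to be the main obstacle is controlling the blocked neighbours. These are the $u$ for which $vu$ lies in a tight cut of $G-E(F')$, or for which $v,u$ are joined through a tight vertex separator. In the tree proof this bound presumably uses a structural lemma saying that tight sets containing $v$ force at most $k-|\text{removed edges at } v|$ blocked edges, or some similar statement. With several components the tight sets can interact across components, so the bound may weaken. This is where I expect $\omega\le 4$ and $k\ge\omega-1$ to enter. I would proceed as follows: choose the roots $r_1,\dots,r_\omega$ to form a small configuration, for instance a removable star or path on the roots. Then apply the tree theorem to the tree $T$ obtained from $F$ by adding $\omega-1$ edges between the roots, with $|T|=m$ and $\delta\ge k+m-1$ replaced by a careful accounting. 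Finally, show that the connecting edges, at most $3$ of them, can be ``returned'' to the graph. Adding edges back never destroys connectivity, but the degree budget then needs $\delta\ge k+m-1$, which is too much.

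Hence the honest route is the direct one. I would reprove the key lemma of the tree case for forests and use $k\ge\omega-1$ to guarantee that the $\omega$ roots can be chosen far apart relative to tight sets. For example, a case analysis on minimal tight sets shows that each contains at most one root when $\omega\le4$. The small-$\omega$ case analysis is where I would expect most of the work.
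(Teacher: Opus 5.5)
Your proposal has a genuine gap: it never says where the deficit of $\omega-1$ in the degree bound is made up, and neither concrete mechanism you offer works.

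First, growing the components in parallel does not ``cost nothing extra''. At the last step the current vertex must avoid the $m-2$ other used vertices and up to $k$ edges that cannot be removed. That is exactly what $\delta\ge k+m-1$ pays for in the tree case, so with only $\delta\ge k+m-\omega$ the same count is $\omega-1$ short. Second, counting $e(F)=m-\omega$ edges instead of $m-1$ does not help. The edges of $F'$ at the current vertex go to already-used vertices, which are excluded anyway by injectivity. The closing plan (roots spread out relative to minimal tight sets, plus an unspecified case analysis) is not an argument, and it still does not say where the missing units come from.

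The missing idea is the forbidden-set strengthening of the tree theorem (Theorem~\ref{thm:removable subtree missing X}). A set $X$ that the tree must avoid costs only $|X|-q$ in the degree bound when $G[X]$ contains a clique of size $q\le k$. The reason is that this clique can be put at the start of an MA ordering, so its vertices are isolated in $G-E_k$. They are forbidden, but they never count among the left neighbours in $G-E_k$ that the greedy embedding relies on. With this tool the components are embedded one at a time:
\begin{itemize}
\item For $\omega=2$, embed $T_1$ first, then $T_2$ with $X=V(T_1')$ and $q=1$.
\item For $\omega=3$, embed $T_1$ and $T_2$ joined by an edge $e$ as a single tree; this is affordable because $m_3\ge 2$. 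Then delete only $E(T_1')\cup E(T_2')$, so the surviving edge $e'$ is a $2$-clique in $X=V(T_1')\cup V(T_2')$ when you embed $T_3$. This is exactly where $k\ge 2$ is used.
\item For $\omega=4$, the other three components are handled the same way via the $\omega=2$ case with a connecting edge, again giving $q=2$. The one remaining unit is recovered by embedding the last component minus a leaf. The leaf is then attached either to a spare free neighbour or through an exchange in $F_k$ (Lemma~\ref{lem:swap}), which is where $k\ge 3$ is used.
\end{itemize}
Your discarded idea of ``returning the connecting edges'' is half of this argument. It fails only because you joined all $\omega$ components at once. The fix is to join all but the last component and use the returned edge as a clique in the forbidden set for the last one.
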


The above results will follow from the next strengthening of Theorem \ref{thm:Hasunuma conjecture}. In a graph $G=(V,E)$ the subgraph induced by a vertex set $X\subseteq V$
is denoted by $G[X]$. A {\it clique} of $G$ means a complete subgraph of $G$.

\begin{theorem} \label{thm:removable subtree missing X}
	Let $G = (V,E)$ be a $k$-(edge-)connected graph, let $T$ be a tree of order $m$, let $X \subsetneq V$, and let $q\geq 0$ be an integer. Suppose that 
    $k\geq q$, $G[X]$ contains a clique of size $q$, and 
    \begin{equation}
    \label{degreebound}
    d(v)\geq k + m - 1 + |X| - q
    \end{equation}
    for all $v\in V-X$.
       Then there exists a subtree $T' \cong T$ of $G - X$ such that $G - E(T')$ is $k$-(edge-)connected.
\end{theorem}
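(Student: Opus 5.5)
We will prove the statement by induction on $m$, building $T'$ one leaf at a time and strengthening the invariant so that the induction goes through. For $m=1$ there is nothing to prove: $T'$ is a single vertex of $V-X\neq\emptyset$ and no edge is removed. For $m\ge 2$, fix a leaf $\ell$ of $T$ with neighbour $p$, and put $T_0=T-\ell$, a tree of order $m-1$.

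\textbf{Choosing the extended set for the induction.} We apply the induction hypothesis to $T_0$, but with a suitably enlarged excluded set $X$ or a suitably chosen $q$. Lowering $m$ by one frees one unit in \eqref{degreebound}. We could spend it by adding one vertex to $X$ (so $|X|$ increases by one and $q$ is unchanged). Alternatively, if the added vertex is adjacent to all of a $q$-clique in $G[X]$, it enlarges that clique, so $q$ also increases and we gain slack. This gives a subtree $T_0'\cong T_0$ of $G-X$ with $G-E(T_0')$ $k$-(edge-)connected. Let $u$ be the image of $p$, and write $G_0=G-E(T_0')$.

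\textbf{Attaching the leaf.} We must find a neighbour $w$ of $u$ in $G_0$ with $w\notin X\cup V(T_0')$ such that $G_0-uw$ is still $k$-(edge-)connected. In $G_0$ the degree of $u$ is at least $d(u)-(m-2)\ge k+1+|X|-q$. Among these neighbours, at most $m-2$ lie in $T_0'$ and at most $|X|$ lie in $X$. We call an edge $uw$ \emph{critical} if deleting it destroys $k$-(edge-)connectivity, and apply the standard theory of critical edges. Every critical edge $uw$ of $G_0$ lies in a tight set $S$ with $u\in S$ and $w\notin S$, meaning $d_{G_0}(S)=k$ in the edge case, or a $k$-separator that separates $u$ from $w$ in the vertex case. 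Using submodularity and uncrossing, the critical edges at $u$ can be controlled: each tight set on the side opposite $u$ carries few edges to $u$, so in total at most about $k$ of them are critical, while the remaining neighbours are non-critical. The degree bound then leaves a non-critical neighbour outside $X\cup V(T_0')$, but only once we also account for the $m-2$ tree vertices. To do so we need the stronger invariant, which is why the induction enlarges $X$ by the current tree vertices. Concretely, we will run the induction with $X$ replaced by $X\cup\{u\}$, or apply it to a reversed construction in which the vertex $u$ is fixed first.

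\textbf{Main obstacle.} The hard part is making the counting close exactly, and this is where the clique of size $q\le k$ in $G[X]$ enters. A tight set avoiding $X$ must have all its vertices of high degree. This forces such a set to be large, larger than $m+|X|-q$, and that size keeps it from absorbing all of $u$'s good neighbours. A tight set containing the whole $q$-clique is handled by observing that the clique members give extra edges or vertices crossing the cut, which saves $q$ in the count. We expect the following approach to work first: choose $w$ to minimise the tight set on its side, then argue by contradiction that a minimal tight set $S\subseteq V-X-V(T_0')$, if it contains all of $u$'s candidate neighbours, has a vertex $v$ with $d(v)\le |S|-1+k<k+m-1+|X|-q$. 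In the vertex-connectivity case the separator analysis requires the most care.
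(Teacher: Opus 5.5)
\textbf{There is a genuine gap, at the core step: attaching the leaf.}

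The induction hypothesis is a pure existence statement. It hands you an arbitrary removable $T_0'$, with no control over where $u=\phi(p)$ lands. You then need an edge $uw$ of $G_0$ that is not critical, with $w\notin X\cup V(T_0')$. Such an edge need not exist, and your claim that only ``about $k$'' edges at $u$ are critical is false.

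Already for $k=2$, $m=2$, $X=\emptyset$, $q=0$ it fails:
\begin{enumerate}
\item Take copies $B_1,\dots,B_{2t}$ of $K_4$, with $t\ge 2$.
\item Add a vertex $u$ joined by one edge to each $B_i$.
\item For each $j$, add one edge between $B_{2j-1}$ and $B_{2j}$.
\end{enumerate}
This graph is $2$-edge-connected with $\delta=3=k+m-1$. But $d(B_i)=2$ for every $i$, so all $2t$ edges at $u$ are critical. If your base case returns $u$, the leaf cannot be attached, although the theorem itself holds for this graph (e.g.\ an edge inside a $K_4$ joining two vertices with no outside neighbour is removable).

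The other parts of the proposal do not rescue this step:
\begin{enumerate}
\item Your ``main obstacle'' counting points the wrong way. You first show that tight sets avoiding $X$ are \emph{large}. You then seek a contradiction from $|S|-1+k<k+m-1+|X|-q$, which requires $|S|$ to be \emph{small}.
\item Putting $u$ into $X$ is incoherent, because $T_0'$ must avoid $X$ while $u$ is a vertex of $T_0'$.
\item The ``reversed construction'' and the strengthened invariant are never actually formulated.
\item The vertex-connectivity case is not addressed at all.
\end{enumerate}

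\textbf{The missing idea} is to fix a sparse certificate \emph{before} embedding anything, so that removability never has to be checked edge by edge. The paper proceeds as follows.
\begin{enumerate}
\item Take an MA ordering of $G$ that starts with the $q$-clique of $G[X]$. This is possible because any ordering of a clique can begin an MA ordering.
\item Embed the whole tree into $G-E_k-X$. Removability is then automatic, since $(V,E_k)$ is $k$-(edge-)connected by Lemma~\ref{lem:maxback}.
\item Send the root to the rightmost vertex outside $X$. Send each new tree vertex to the rightmost free left neighbour of its parent's image.
\item The ordering is also an MA ordering of $G-E_k$, so backward degree in $G-E_k$ passes from parent to child under this greedy rule, up to the left neighbours that were skipped. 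This gives an invariant: after $i$ steps, each image has at least $m-i$ more left neighbours in $G-E_k$ than are used by the tree or lie in $X$, counting only the $X$-vertices beyond the $q$ clique vertices.
\item Those $q$ clique vertices, being among the first $k$ (as $q\le k$), are isolated in $G-E_k$, so $X$ costs only $|X|-q$. This is why the degree bound \eqref{degreebound} suffices, and a free left neighbour always exists.
\end{enumerate}

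This global restriction of which edges the tree may use is exactly what your local critical-edge analysis lacks.
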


By choosing $X=\emptyset$ and $q=0$ we obtain
Theorem \ref{thm:Hasunuma conjecture} as an immediate corollary of Theorem \ref{thm:removable subtree missing X}. 
We can also deduce that the removable tree in Theorem \ref{thm:Hasunuma conjecture}
can be chosen so that it avoids a given vertex $v$ (put $X=\{v\}$
and $q=1$).

We also consider removable matchings, whose study was initiated in a recent paper of Li et al. \cite{li2026halinsedgeremovabilitymatching}. We have the following sufficient condition for the
existence of a removable matching of size two.
The $k$-connected version
of the statement was proved
earlier in
\cite[Theorem 1.6]{li2026halinsedgeremovabilitymatching}.

\begin{theorem} \label{thm:removable 2-matching}
	Let $G=(V,E)$ be a
    $k$-(edge-)connected graph, $k \ge 2$, with $\delta(G) \ge k + 1$.
    Then $G$
    contains a matching $M$ with $|M|=2$ such that $G - M$ is $k$-(edge-)connected.
\end{theorem}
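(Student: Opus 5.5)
Our plan is to reduce the problem to the tree results stated earlier. The key tool is Theorem~\ref{thm:removable forests} with $F$ a forest of two components, each a single edge, so $F=2K_2$, $m=4$, $\omega=2$. That theorem would give a removable $2K_2$ only under $\delta(G)\ge k+2$. Here we have only $\delta(G)\ge k+1$, so we cannot apply it directly. Instead we argue in two stages, first removing one edge and then a second one.

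\textbf{Stage 1: choose the first edge.} By Theorem~\ref{thm:removable subtree missing X} with $T=K_2$ ($m=2$), $X=\emptyset$, $q=0$, the bound $\delta(G)\ge k+1$ supplies an edge $e_1=uv$ with $G-e_1$ $k$-(edge-)connected. More usefully, we want freedom in this choice. Applying the same theorem with $X=\{w\}$ and $q=1$ requires $d(x)\ge k+2$ for $x\ne w$, which is too strong. So instead we use the standard "critical set" structure.

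\textbf{Stage 2: find the second edge.} Let $G_1=G-e_1$. All vertices other than $u,v$ still have degree at least $k+1$ in $G_1$. Suppose every edge $f$ of $G_1-\{u,v\}$, i.e.\ every edge disjoint from $e_1$, is critical in $G_1$. Each such $f$ then crosses a tight cut of $G_1$: an edge cut of size exactly $k$ in the edge version, or a $k$-separator $S$ with $f$ the only edge between two sides in the vertex version. We would take such a tight set $A$ that is minimal among sets disjoint from, or containing, $\{u,v\}$ appropriately. We then use submodularity of $d$, or of $|N(\cdot)|$ in the vertex case, to uncross and obtain a smallest tight set $A$.

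\textbf{Counting inside $A$.} The inequality $\sum_{x\in A} d_{G_1}(x)\ge |A|(k+1)-2$ together with $d(A)=k$ forces $|A|$ to be small, roughly $|A|\le 2$ or $A=\{x\}$ with $x\in\{u,v\}$. Every vertex of $A\setminus\{u,v\}$ has degree at least $k+1>d(A)$, so $A$ must contain an internal edge or be tiny. Minimality plus the assumption that all edges inside $A$ avoiding $u,v$ are critical should give a contradiction, as in Mader's/Halin's theorem on critical edges: in a minimally $k$-(edge-)connected graph some vertex has degree $k$. Concretely, we would apply Mader's theorem to the subgraph of $G_1$ consisting of the non-removable edges. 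Mader's theorem says that every cycle of critical edges in a $k$-connected graph contains a vertex of degree $k$. Hence the critical edges avoiding $\{u,v\}$ form a forest among vertices of degree at least $k+1$, apart from those touching $u$ and $v$.

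\textbf{Closing the argument.} If the count fails for every choice of $e_1$, we vary $e_1$ over all removable edges. Then either every edge is critical, contradicting Halin/Mader since $\delta\ge k+1$, or the removable edges form a star or a triangle. We rule this out by the degree count: each vertex has at least $k+1$ incident edges, and the critical ones form a forest, so at most $|V|-1$ edges are critical. This leaves at least $(k+1)|V|/2-|V|+1$ removable edges. For $k\ge2$ this exceeds the size of any star or triangle unless $|V|$ is tiny. Those small cases, e.g.\ $G=K_{k+2}$, we check by hand.

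We expect the main obstacle to be that after deleting $e_1$, $G_1$ is no longer guaranteed to have removable edges disjoint from $e_1$ in the vertex-connected case. Mader's cycle theorem there needs care because $u$ and $v$ may have degree $k$ in $G_1$. We would first try choosing $e_1$ to minimize the number of critical edges of $G_1$, and then apply the forest bound to $G_1$ with $u,v$ treated as exceptional vertices.
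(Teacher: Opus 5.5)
There is a genuine gap: the proposal never produces an edge $f$ disjoint from $e_1=uv$ such that $G-e_1-f$ is $k$-(edge-)connected, which is what the theorem asks for.

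\textbf{Joint versus individual removability.} Your closing step counts edges that are removable in $G$ one at a time and rules out their forming a star or triangle. But $G-e$ and $G-f$ both being $k$-(edge-)connected does not make $G-e-f$ so. Two disjoint, individually removable edges therefore give you nothing.

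\textbf{The tight-set count.} Stage 2 does not bound $|A|$. Since $\sum_{x\in A}d_{G_1}(x)=2|E(G_1[A])|+d_{G_1}(A)$, the degree bound places no restriction on $|A|$.

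\textbf{Mader's cycle theorem.} It is a vertex-connectivity statement, and its edge analogue fails even under your degree hypothesis. Take a triangle $c_1c_2c_3$ and join each $c_i$ by two edges to two vertices of its own copy $B_i$ of $K_4$. This graph is $2$-edge-connected with minimum degree $3$. Each triangle edge lies in the $2$-edge cut of $\{c_i\}\cup V(B_i)$, so all three are critical, yet every $c_i$ has degree $4$. So the edge-connected half of the statement has no argument at all.

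\textbf{What your ingredients can and cannot do.} In the vertex case they work for $k\ge 3$, provided you apply them in $G_1$ rather than $G$. The $k$-essential edges of $G_1$ inside $V-\{u,v\}$ form a forest, because every vertex there has $G_1$-degree at least $k+1$, so there are at most $n-3$ of them. On the other hand, $G-\{u,v\}$ has at least $(k-1)(n-2)/2\ge n-2$ edges, so some edge there is removable in $G_1$. For $k=2$ this count is insufficient, and for edge-connectivity the forest claim is false.

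\textbf{What the paper does instead.} It splits on the degrees of the ends of a removable edge $e=xy$ in $H=G-e$.
\begin{itemize}
\item If both ends have degree $k$ in $H$, these are the only degree-$k$ vertices of $H$. Mader's theorem says a minimally $k$-(edge-)connected graph has at least $k+1\ge 3$ vertices of degree $k$, so $H$ is not minimal and has a removable edge. That edge cannot touch $x$ or $y$, so it is automatically disjoint from $e$.
\item Otherwise $d_H(y)\ge k+1$. Take an MA ordering of $H$ starting at $x$, with $y$ placed as early as possible. An edge outside $E_k$ at the last vertex (or at $v_{n-1}$ if $y$ comes last) avoids $x$, since all edges at $x$ lie in $F_1$. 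If it hits $y$, Lemma~\ref{lem:swap} lets you exchange it for the $F_k$-edge at that leaf.
\end{itemize}
This yields joint removability directly, for every $k\ge 2$ and for both notions of connectivity.
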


The following theorem is our main result concerning $k$-edge-connected graphs of high minimum degree.
The $k$-connected version of Theorem \ref{thm:matching size}(a) appeared in \cite[Theorem 1.7]{CKP}.

\begin{theorem} \label{thm:matching size}
Let $G=(V,E)$ be a $k$-edge-connected graph and let $m$ be a positive integer. Suppose that $|V|\geq 2m$ and 
$\delta(G) \ge k + m$. Then\\
(a) $G$ contains a matching $M$ with $|M|=m$ such that $G-M$ is $k$-edge-connected.\\
Furthermore, if $k\geq 2$ and $1\leq m\leq k-1$, then\\
(b) $G$ contains a matching $M$ with $|M|=m+1$ such that $G-M$ is $k$-edge-connected.
\end{theorem}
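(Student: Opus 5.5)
We prove (a) and (b) together by induction on $m$. The tool is the following edge-removal lemma, which we would prove first.

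\emph{Lemma.} Let $H$ be $k$-edge-connected, and let $uv$ be an edge with $d_H(u),d_H(v)\ge k+1$. If $H-uv$ is not $k$-edge-connected, then there is a set $Z$ with $u\in Z$, $v\notin Z$ and $d_H(Z)=k$.

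\emph{Plan for (a).} Suppose we have a removable matching $M_0$ of size $m-1$. Put $H=G-M_0$. Every vertex loses at most one edge, so $\delta(H)\ge k+m-1\ge k+1$. We now look for an edge $e=uv$ of $H$ with $u,v\notin V(M_0)$ such that $H-e$ is $k$-edge-connected.

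\emph{Tight sets.} Call a set $Z$ with $d_H(Z)=k$ tight. Tight sets satisfy the usual uncrossing (submodularity) rules. A minimal tight set $Z$ has $|Z|\ge 2$. Indeed, if $|Z|=1$ then its single vertex would have degree $k$, contradicting the minimum degree bound.

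\emph{Counting inside a tight set.} Take a tight set $Z$, and count edges to obtain
\[
|Z|(k+m-1)\le \sum_{z\in Z} d_H(z)\le |Z|(|Z|-1)+k.
\]
Hence $|Z|\ge k+m$ whenever $|Z|\ge 2$; more precisely, $|Z|\ge m+k-1$ roughly. So every tight set is large.

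\emph{Choosing the edge.} Now take a minimal tight set $Z$, or all of $V$ if there is no tight set. At most $2(m-1)$ vertices of $Z$ are covered by $M_0$. Since $|Z|$ is large, $Z$ contains many free vertices. Each free vertex $u$ has at least $k+m-1-k$ neighbours inside $Z$, and we want one of them, $v$, to be free as well.

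Suppose $uv$ is such an edge with both ends free in $Z$. If $H-uv$ is not $k$-edge-connected, the Lemma gives a tight set $Y$ separating $u$ from $v$. Uncrossing $Y$ with $Z$ then yields a tight set properly inside $Z$, which contradicts the minimality of $Z$. So the real task is to guarantee that the free vertices of $Z$ span an edge.

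\emph{The main obstacle.} Suppose the free vertices of $Z$ form an independent set. Each free vertex has degree at least $k+m-1$. Its neighbours lie among at most $2m-2$ matched vertices of $Z$ plus at most $k$ edges leaving $Z$, which gives too few options for degree counting alone once $|Z|$ is large.

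If this degree count does not settle it, I would use an augmenting-path or exchange argument. Take a free vertex $u$ adjacent to a matched vertex $x$, where $xy\in M_0$ and $y$ has a free neighbour $w\neq u$. Replace $xy$ by $ux$ and $yw$; this gives a matching of size $m$. We must still check removability, using the tight-set structure.

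The hypothesis $|V|\ge 2m$ is used only to ensure that free vertices exist at all. This exchange step, combined with control of tight sets, is where I expect the difficulty to lie.

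\emph{Plan for (b).} For (b) we use the slack $m\le k-1$. Starting from the matching of size $m$ given by (a), the same exchange argument applies with $\delta(H)\ge k+1$, for the same reason as before. The extra slack $k-m\ge 1$ is needed to bound how many tight sets can meet the matching. I would base the case analysis on Theorem~\ref{thm:removable 2-matching} as the case $m=1$.
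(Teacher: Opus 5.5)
Your proposal has genuine gaps in both parts. The tight-set facts you record are correct: a minimal tight set has at least $k+m$ vertices, and every edge inside a minimal tight set is removable. They do not, however, produce the edge you need.

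\textbf{Part (a).} The step ``since $|Z|$ is large, $Z$ contains many free vertices'' does not follow. You have $|Z|\ge k+m$, but up to $2m-2$ vertices of $Z$ may be covered by $M_0$. So for $m\ge k+2$ your count does not guarantee even one free vertex in $Z$. Moreover, nothing places an edge of $H[U]$, where $U=V-V(M_0)$, inside a minimal tight set, even when $H[U]$ has edges. Your case split is therefore not exhaustive.

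The missing idea (Lemmas~\ref{lem:sub} and~\ref{lem:removable edge avoiding vertices}) is to minimise over tight sets $A$ \emph{crossed by an edge of} $H[U]$, taking the side with $|A-U|\le m-1$. Uncrossing shows that the end $u\in A$ of such an edge has no neighbour in $A\cap U$. Hence $d_G(u)=d_H(u)\le k+|A-U|\le k+m-1$, contradicting $\delta(G)\ge k+m$. This uses the full degree of free vertices, not your weaker bound $\delta(H)\ge k+m-1$. So if $G[U]$ has any edge, one of its edges is removable from $H$.

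The remaining case is $U$ independent in $G$, which forces $m\ge k+2$; this is exactly where you stop.
\begin{itemize}
\item Existence of your exchange is a pigeonhole argument. For free $u\ne v$ we have $N(u)\cup N(v)\subseteq V(M_0)$ and $d(u)+d(v)\ge 2(k+m)>2|M_0|$, so some $xy\in M_0$ has $ux,vy\in E$.
\item Removability is the real content. The paper shows that $G-M_0-\{u,v\}$ is $k$-edge-connected by a degree count on a deficient set $X$ chosen with $|X\cap V(M_0)|\le m-1$: a free vertex in $X$ has all its neighbours in $V(M_0)$, hence degree at most $k+m-2$. It then re-adds $u$ and $v$, each still having at least $k$ edges after deleting $ux$ and $vy$.
\end{itemize}
A caution if you reproduce this count: when $X\subseteq V(M_0)$, each $z\in X$ also has its $M_0$-edge, which the paper's displayed bound omits. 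Summing over $X$ still gives a contradiction for $k\ge 2$. For $k=1$ this intermediate claim can fail, e.g.\ in the octahedron with $m=3$, $\{u,v\}$ an antipodal pair and $M_0$ a perfect matching of the remaining $4$-cycle.

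\textbf{Part (b).} You give no argument, and the tight-set route is off by one. With $|M_0|=m\le k-1$, every free vertex has a free neighbour because $d(u)\ge k+m>2m$, so no exchange is needed. But the argument above now only gives $d(u)\le k+|A-U|\le k+m$, which is not a contradiction.

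The paper avoids this with Mader's Theorem~\ref{thm:k-EC removable vertex}:
\begin{itemize}
\item Choose $v$ with $G-v$ $k$-edge-connected.
\item Apply the induction hypothesis to $G-v$, with base case Theorem~\ref{thm:removable 2-matching}. Since $\delta(G-v)\ge k+m-1$, this gives a matching $M$ of size $m$ with $G-v-M$ $k$-edge-connected.
\item Add $uv$ for a neighbour $u\notin V(M)$ of $v$, which exists since $d(v)\ge k+m\ge 2m+1$.
\item $G-M-uv$ is $k$-edge-connected because $v$ retains at least $k$ edges.
\end{itemize}
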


The main tool in our proofs is the so-called maximum adjacency ordering of graphs.
In Section \ref{sec:vertex orderings of graphs} we summarize the key properties of
these orderings. We prove Theorems \ref{thm:Hasunuma conjecture}, \ref{thm:removable forests}, and \ref{thm:removable subtree missing X} in Section \ref{sec:removable trees}.
In Section \ref{sec:removable matchings}
we prove Theorems \ref{thm:removable 2-matching} and \ref{thm:matching size}.
Section \ref{sec:removable 3-matching} contains two further results on removable
matchings of size three.
Concluding remarks 
are given in
Section \ref{sec:conc}.

\section{Maximum adjacency orderings of graphs}
\label{sec:vertex orderings of graphs}

Let $G = (V,E)$ be a graph on $n$ vertices and let $v_1,v_2, \ldots, v_n$ be an ordering of its vertices. For an integer $i$, $1\leq i\leq n$, let $V_i = \{v_1,v_2, \ldots, v_i\}$.
We say that $v_1,v_2, \ldots, v_n$ is a \textit{maximum adjacency ordering}
(or {\it MA ordering}, for short) of $G$ if
\begin{equation}
\label{eq:max-back criteron}
	d_G(v_i,V_{i-1}) \ge d_G(v_j, V_{i - 1}) 
\end{equation}
for all pairs $i,j$, such that $2 \le i < j \le n$, where $d_G(v,X)$ denotes the number of edges from vertex $v$ to vertex set $X\subseteq V$ in $G$.

Let $G=(V,E)$ be a graph and let $v_1,v_2, \ldots, v_n$ be an MA ordering of $G$. For a subgraph $G'$ of $G$ 
let $d_{G'}^- (v_i) = d_{G'}(v_i, V_{i - 1})$, $1\leq i\leq n$, denote the number of
left neighbours of $v_i$ in $G'$, with respect to the ordering. For a positive integer $w$, let
$E_w^{i}$ be the set of edges that connect $v_i$ to its 
$\min \{w, d_G^-(v_i)\}$ leftmost neighbours with respect to the ordering
in $G$.
Let $E_w=\cup_{i=1}^n E_w^i$ and $G_w=(V,E_w)$.

\begin{lemma}
\cite{FIN,NIbook}
\label{lem:maxback}
Let $G=(V,E)$ be a $k$-(edge-)connected graph
and let $v_1,v_2, \ldots, v_n$ be an MA ordering of $G$.
Then $G_k=(V,E_k)$ is a $k$-(edge-)connected
spanning subgraph of $G$. Furthermore, $v_1,v_2, \ldots, v_n$ is an MA ordering of $G-E_k$.
\end{lemma}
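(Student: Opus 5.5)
The lemma has two parts: $G_k$ is $k$-(edge-)connected, and the MA ordering survives deleting $E_k$. I would prove the edge-connected version first, by a cut-counting argument, then adapt it to vertex connectivity, and finally check the second part directly from the definitions.

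\textbf{Edge connectivity of $G_k$.} The key structural fact about an MA ordering concerns $\lambda_H(u,v)$, the local edge-connectivity between $u$ and $v$ in a graph $H$. I would show by induction on $i$ that for every $j>i$,
\[
\lambda_{G_k}(v_i,v_j)\ \ge\ \min\{k,\ d_G^-(v_j\mid \text{restricted suitably})\},
\]
or more concretely the standard statement
\[
\lambda_{G_k}(V_{i-1}, v_i)\ \ge\ \min\{k, d_G^-(v_i)\}.
\]
This is the Nagamochi--Ibaraki style claim: $G_k$ contains $\min\{k,d^-_G(v_i)\}$ edge-disjoint paths from $v_i$ to $V_{i-1}$ that are usable in the cut sense.

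A cleaner route is via cuts. Take any nonempty $S\subsetneq V$ and let $v_j$ be the smallest-index vertex such that $V_j$ meets both $S$ and $V-S$. Then $V_{j-1}$ lies entirely on one side, say in $S$, and $v_j\notin S$.

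\begin{itemize}
\item If $v_j$ has at least $k$ left neighbours in $G$, then all of them lie in $S$. Its $k$ leftmost left edges belong to $E_k$ and all cross the cut, so $d_{G_k}(S)\ge k$.
\item Otherwise $d^-_G(v_j)<k$, and I need to count further crossing edges. Here I use the MA property: $d(v_i,V_{i-1})\ge d(v_l,V_{i-1})$ for $l>i$.
\end{itemize}

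The general argument I would follow is this. For each $i$, every edge counted in $d_G(S)$ whose right endpoint is $v_i$ either lies in $E_k$, or $v_i$ has at least $k$ left neighbours; in the latter case those leftmost $k$ edges lie in $E_k$. I would then argue that if $d_{G_k}(S)<k$, one can find a vertex $v_i$ where the crossing $E_k$-edges are fewer than $k$ while some crossing edge of $G$ is dropped. Repeated use of the MA inequality then gives a contradiction with $d_G(S)\ge k$. The central step is the lemma
\[
d_{G_k}(S)\ \ge\ \min\{k,\ d_G(S)\},
\]
proved by induction on $n$. Delete $v_n$: the ordering $v_1,\dots,v_{n-1}$ is an MA ordering of $G-v_n$, and $(G-v_n)_k=G_k-v_n$. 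Then analyse the cut separately according to whether the cut $S$ isolates $v_n$ or not. I expect this induction step to be the main obstacle. The cut restricted to $V-v_n$ may drop below $k$ in $G-v_n$, and the lost edges must be recovered through $v_n$'s leftmost-$k$ edges together with the MA inequality applied at the last index where the cut appears.

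\textbf{Vertex connectivity.} I would repeat the same induction with the analogous inequality for vertex cuts $(A,B,C)$ with $C$ separating. Equivalently, I would argue with local vertex-connectivity via Menger, using the leftmost-neighbour rule to control neighbours inside the separator.

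\textbf{MA ordering of $G-E_k$.} This is a direct check. For $i<j$ we have
\[
d_{G-E_k}(v_i,V_{i-1}) = d_G(v_i,V_{i-1})-\min\{k,d^-_G(v_i)\}.
\]
The edges of $v_j$ into $V_{i-1}$ that are removed number at least
\[
\min\{k,\,d_G(v_j,V_{i-1})\},
\]
because the leftmost neighbours of $v_j$ are taken first, and neighbours in $V_{i-1}$ are leftmost. Hence
\[
d_{G-E_k}(v_j,V_{i-1})\ \le\ d_G(v_j,V_{i-1})-\min\{k,d_G(v_j,V_{i-1})\}.
\]
The function $x\mapsto x-\min\{k,x\}$ is monotone nondecreasing. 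Combining this with $d_G(v_i,V_{i-1})\ge d_G(v_j,V_{i-1})$ and $d^-_G(v_i)=d_G(v_i,V_{i-1})$ gives inequality \eqref{eq:max-back criteron} for $G-E_k$.
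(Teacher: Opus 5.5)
Your final part is correct. An edge of $E_k$ lies in $E_k^l$ only for its right endpoint $v_l$, so exactly $\min\{k,d_G(v_j,V_{i-1})\}$ edges from $v_j$ to $V_{i-1}$ are deleted. Monotonicity of $x\mapsto\max\{0,x-k\}$ then carries the MA inequality over to $G-E_k$.

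The connectivity statement, which is the substance of the lemma, is not proved. In your cut argument the case $d^-_G(v_j)<k$ is left open. The sentence about finding a vertex $v_i$ and ``repeated use of the MA inequality'' is not an argument. You also explicitly leave the induction step for $d_{G_k}(S)\ge\min\{k,d_G(S)\}$ as the main obstacle, and it is a real one. Suppose $v_n\in S$ has its $k$ leftmost neighbours in $S$ and a further neighbour in $V-S$. Then no edge of $E_k$ at $v_n$ crosses $S$. The inductive hypothesis for $S-v_n$ in $G-v_n$ then yields $d_{G_k}(S)\ge k$ only if $d_{G-v_n}(S-v_n)\ge k$, and the hypothesis does not provide this. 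For vertex connectivity you do not even formulate the ``analogous inequality''.

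For the edge version, the missing idea is to split $E_k$ into the forests $F_1,\dots,F_k$ of Section~2. Here $F_i$ is the leftmost-neighbour forest of the ordering in $G-E_{i-1}$; it joins each vertex to its $i$-th leftmost neighbour, so $E_k=\bigcup_{i\le k}E(F_i)$. Two facts make each $F_i$ a maximal forest of $G-E_{i-1}$:

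1. Your Part~3, with $k$ replaced by $i-1$, shows the ordering is MA in $G-E_{i-1}$.
2. The leftmost-neighbour forest of any MA ordering is a maximal forest. If $v_i$ has no left neighbour, then by the MA inequality no later vertex has a neighbour in $V_{i-1}$, so $V_{i-1}$ is a union of components. Hence the vertices with no left neighbour are exactly the first vertices of the components.

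Now suppose some edge $uv$ joining $S$ to $V-S$ is not in $E_k$. For each $i\le k$, $uv$ lies in $G-E_{i-1}$ but not in $F_i$. By maximality, $F_i$ contains a $u$--$v$ path, and that path must cross $S$. Since the $F_i$ are edge-disjoint, $d_{G_k}(S)\ge k$. If no such edge exists, every edge leaving $S$ lies in $E_k$. This gives your inequality directly, with no induction on $n$. It is the mechanism the paper points to in Section~2; the paper itself cites Frank--Ibaraki--Nagamochi and Nagamochi--Ibaraki rather than proving the lemma.

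This does not settle the vertex version, because nested maximal forests need not certify vertex connectivity. Glue two copies of $K_4$ at one vertex, each copy split into two edge-disjoint Hamiltonian paths, and add one edge joining the copies away from the common vertex. The resulting $G$ is $2$-connected. Taking $F_1$ and $F_2$ to be the unions of corresponding Hamiltonian paths gives a maximal forest $F_1$ of $G$ and a maximal forest $F_2$ of $G-E(F_1)$, yet $F_1\cup F_2$ has a cut vertex. So for $k$-connectivity you need an argument that uses the specific structure of the MA forests, as in the Nagamochi--Ibaraki proof, and the proposal contains none.
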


We shall also use a refinement of Lemma \ref{lem:maxback} in the
proof of the $w=4$ case of Theorem \ref{thm:removable forests} and in
Sections \ref{sec:removable matchings} and \ref{sec:removable 3-matching}. 
Let $G=(V,E)$ be a graph and let $v_1,v_2, \ldots, v_n$ be an ordering of $V$.
The {\it forest of the ordering} is a subgraph $F$ of $G$ which consists of those edges that
connect a vertex
$v_i$, $1\leq i\leq n$, to its leftmost neighbour (if $v_i$ has at least one left neighbour).
Note that $F$ is indeed cycle-free.
Suppose that the ordering is an MA ordering of $G$.
Then the forest $F$ of the ordering is a maximal forest of $G$, and 
the ordering is an MA ordering of $G-E(F)$, too.
For some positive integer $w$ 
let $F_1$ be the forest of the ordering and let $F_i$ be the forest of the ordering in $G-\cup_{j=1}^{i-1} E(F_j)$,
$2\leq i\leq w$. Note that $E_w=\cup_{j=1}^{w} E(F_j)$.

The following strengthening of Lemma \ref{lem:maxback} will make it possible to modify the edge set of $F_k$ (and $E_k$) so that $k$-(edge-)connectivity
is preserved. This tool will be useful in the identification of removable matchings.
The $k$-edge-connected version of the next lemma
follows easily from the fact that the forests $F_i$, $1\leq i\leq k-1$ are also maximal. 
The $k$-connected version 
can be proved by a straightforward modification of the inductive proof of
\cite[Theorem 7.5.7]{FAbook}. 


\begin{lemma}
\label{lem:swap}
Let $G=(V,E)$ be a $k$-(edge-)connected graph
and let $v_1,v_2, \ldots, v_n$ be an MA ordering of $G$.
Suppose that $F_k'$ is a maximal forest in $G-E_{k-1}$.
Then $G_k'=(V,E_{k-1}\cup F_k')$ is a $k$-(edge-)connected
spanning subgraph of $G$.
\end{lemma}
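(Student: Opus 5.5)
We need to prove Lemma \ref{lem:swap}: $E_{k-1}\cup F_k'$ is $k$-(edge-)connected, where $F_k'$ is any maximal forest of $G-E_{k-1}$.

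\emph{Edge-connected version.} The plan is a cut-counting argument that uses the maximality of each $F_i$ in its residual graph. Fix a nonempty proper subset $S\subsetneq V$, and let $d_H(S)$ denote the number of edges of $H$ leaving $S$. Set $G^{(0)}=G$ and $G^{(i)}=G-\cup_{j\le i}E(F_j)$; by the remark above, $F_i$ is a maximal forest of $G^{(i-1)}$. We will show by induction on $i\le k-1$ that either $d_{E_i}(S)\ge k$, or $d_{E_i}(S)=i$ and $d_{G^{(i)}}(S)\ge k-i\ge 1$. At the start $d_G(S)\ge k$.

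For the inductive step, suppose $d_{G^{(i-1)}}(S)\ge 1$. Then the maximal forest $F_i$ must contain an edge crossing $S$, since otherwise some crossing edge of $G^{(i-1)}$ could be added to $F_i$ without creating a cycle. Hence either the count reaches $k$, or every $F_i$ with $i\le k-1$ contributes at least one crossing edge, and removing these keeps $d_{G^{(i)}}(S)\ge d_G(S)-d_{E_i}(S)$. The bookkeeping needs care: if $d_{E_{k-1}}(S)\ge k$ we are done; otherwise $d_{E_{k-1}}(S)\le k-1$, so $d_{G-E_{k-1}}(S)\ge 1$. Then $F_k'$, being maximal in $G-E_{k-1}$, contains a crossing edge. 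In this case the forests $F_1,\dots,F_{k-1}$ each contribute at least one crossing edge, so $d_{E_{k-1}}(S)\ge k-1$, and the edge from $F_k'$ raises the total to $k$.

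\emph{Vertex-connected version.} Here I would follow the inductive scheme of the proof that $G_k$ is $k$-connected, as in \cite[Theorem 7.5.7]{FAbook}, with induction on $k$. By Lemma \ref{lem:maxback}, $G_{k-1}=(V,E_{k-1})$ is $(k-1)$-connected. It then suffices to show that for every vertex set $C$ with $|C|=k-1$, the graph $G_k'-C$ is connected; equivalently, removing $C$ from $(V,E_{k-1}\cup F_k')$ does not disconnect it.

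Suppose, for contradiction, that $G'_k-C$ has a component $A$ with $B=V-C-A\ne\emptyset$. Since $G$ is $k$-connected, $G-C$ is connected, so some edge of $G$ joins $A$ to $B$. Our aim is to produce such an edge in $E_{k-1}\cup F_k'$.

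Take the crossing $A$–$B$ edge $v_iv_j$ with $i<j$ and $j$ minimal. If it is not in $E_{k-1}$, then $v_j$ has at least $k-1$ left neighbours before $v_i$ joined to it by $E_{k-1}$ edges. The MA property is what controls these neighbours: combined with the minimality of $j$, it should force those neighbours to lie in $C$ and to fill it completely.

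Then $v_iv_j$ lies in $G-E_{k-1}$, so the maximal forest $F_k'$ connects $v_i$ and $v_j$ by a path in $G-E_{k-1}$. That path must cross from $A$ to $B$ outside $C$ or pass through $C$. The main obstacle is excluding paths through $C$. For this I would exploit the $E_{k-1}$ structure around $C$, namely that each vertex of $C$ is joined by $E_{k-1}$ edges toward both sides, to obtain an $A$–$B$ connection avoiding $C$ and hence a contradiction.

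I expect this vertex case, which requires adapting Frank's MA-ordering argument to an arbitrary maximal forest in place of $F_k$, to be the hardest step.
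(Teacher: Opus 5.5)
Your edge-connected argument is correct, and it is the argument the paper has in mind: the paper only remarks that this case follows from the maximality of $F_1,\dots,F_{k-1}$ in their residual graphs. What carries it is your closing bookkeeping. Since $G-E_{k-1}$ is a subgraph of $G-\bigcup_{j<i}E(F_j)$ for every $i\le k$, a crossing edge of $G-E_{k-1}$ forces a crossing edge in each of $F_1,\dots,F_{k-1}$ and in $F_k'$. Your stated induction hypothesis ``$d_{E_i}(S)=i$'' is wrong as written (it should be $\ge i$), but the final paragraph does not use it.

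The vertex-connected part has a genuine gap: neither of its two steps is established. The first step claims that the $k-1$ leftmost neighbours of $v_j$ lie in $C$. This does not follow from the MA property, the minimality of $j$, and the absence of $E_{k-1}$-edges between $A$ and $B$. Here is a counterexample to that inference.
\begin{itemize}
\item Take $k=3$ and the graph on $v_1,\dots,v_7$ with edges $v_1v_2$, $v_1v_3$, $v_1v_4$, $v_3v_4$, $v_2v_5$, $v_3v_5$, $v_4v_5$, $v_2v_6$, $v_5v_6$, $v_4v_7$, $v_5v_7$, $v_6v_7$.
\item It is $3$-connected, $v_1,\dots,v_7$ is an MA ordering, and $E_2=E-\{v_4v_5,v_6v_7\}$.
\item For $C=\{v_1,v_5\}$, the graph $G_2-C$ has the two components $A=\{v_2,v_6\}$ and $B=\{v_3,v_4,v_7\}$.
\item The only edge of $G$ between $A$ and $B$ is $v_6v_7\notin E_2$. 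Yet the two leftmost neighbours of $v_7$ are $v_4\in B$ and $v_5$, so they do not both lie in $C$.
\end{itemize}

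The second step, which you yourself call the main obstacle, is where all the work lies, and you leave it as an intention. The property you plan to use is that every vertex of $C$ has $E_{k-1}$-neighbours on both sides. But this holds for any minimum separator of the $(k-1)$-connected graph $G_{k-1}$ and says nothing about $G-E_{k-1}$. So it cannot exclude an $F_k'$-path $\dots a\,c\,b\dots$ with $a\in A$, $c\in C$, $b\in B$ and $ac,cb\in E-E_{k-1}$. Indeed, if $G_{k-1}$ were replaced by an arbitrary $(k-1)$-connected spanning subgraph, the conclusion could fail for some choices of the maximal forest.

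What is needed is MA-specific control over where the edges of $G-E_{k-1}$ at vertices of $C$ can go. For $k=2$ this is easy. Every edge of $G-E(F_1)$ at $c$ ends in the component of $F_1-c$ containing the parent of $c$, and there are none if $c=v_1$. The reason is that descendants of $c$ come after $c$ and have their leftmost neighbour among $c$ and its descendants. Hence an $F_2'$-path through $c$ enters and leaves $c$ in that same component, and connectivity of $G-c$ finishes the proof.

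For $k\ge 3$ the naive analogue fails. In the example above, the edge $v_4v_5\notin E_2$ joins $v_5\in C$ to $B$, not to the component containing the leftmost vertex $v_2$ of $V-C$. So a genuinely new ingredient is required. One option is to carry out the modification of the inductive proof of \cite[Theorem 7.5.7]{FAbook} that the paper points to; the paper itself gives nothing beyond this pointer.
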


We refer the reader to \cite{FAbook,NIbookbook} for more details and applications of
MA orderings, and to \cite{BJ} for an earlier application of these orderings in removable subgraph problems.

\section{Removable trees}
\label{sec:removable trees}

In this section we
prove Theorem \ref{thm:removable subtree missing X}
and show how it implies Theorem \ref{thm:removable forests}.
%
On the reader's first read of the proof (or to obtain Theorem \ref{thm:Hasunuma conjecture} directly), it may be helpful to consider the case when $X = \emptyset$ and $q=0$. 

\begin{proof}[Proof of Theorem \ref{thm:removable subtree missing X}]
Let $v_1,v_2, \ldots, v_q \in X$ be the vertices of a clique of size $q$ in $G[X]$, ordered arbitrarily, and let $v_1,v_2, \ldots, v_n$ be an MA ordering of $G$ that extends this ordering. Let $G'=G-E_k$ and $H = G' - X$.
We shall develop an algorithm that outputs
a sequence $W_1 \subseteq \cdots \subseteq W_m = V(T)$ with $|W_i| = i$,
$1\leq i\leq m$, and
an injective map $\phi: V(T) \to V - X$ such that 
$H[\phi(W_i)]$ contains a copy of $T[W_i]$ for all $1\leq i\leq m$.
By Lemma \ref{lem:maxback}, if $T'$ is a copy of $T$ contained in $H[\phi(V(T))]$ then  
$G-E(T')$ is $k$-(edge-)connected, that is,
$T'$ is the desired removable tree in $G-X$.
For convenience, we will write $W_i' = \phi(W_i)$ and $t' = \phi(t)$ for $1\leq i\leq m$ and $t \in W_i$.

The algorithm has $m$ iterations. In each iteration we define $W_i$
by adding a new vertex to $W_{i-1}$, and also define the image of the new vertex (we put $W_{0}=\emptyset$). In the first iteration
we first choose a root vertex $r \in V(T)$ arbitrarily. Let $r'$ be the rightmost vertex of $H$ with respect to the MA ordering of $G$. 
Note that $r'$ exists, since $X \ne V$.
We define $W_1 = \{r\}$ and $\phi(r) = r'$.
During iteration $i \ge 2$ of the algorithm, we say a vertex $v$ of $G$ is {\it free} if $v \in V - X - W_{i - 1}'$.
In iteration $i$, $i \ge 2$, we consider a vertex 
$t \in W_{i- 1}$ which has a neighbor $s \notin W_{i - 1}$ in $T$.
We pick the rightmost free left neighbour $s'$ of $t'$ in $G$, and we define $W_i = W_{i - 1} \cup \{s\}$ and $\phi(s) = s'$.

Since each new assignment is made ``along'' a copy of an edge of $T$ in $H$, we have that $H[W_i']$ contains a copy of $T[W_i]$ for $1\leq i\leq m$.
It remains to show the above algorithm is well-defined. That is, for each iteration $i \ge 2$ and choice of $t$, a free left neighbour of $t'$ exists.

Let $N^-_v$ denote the vertices of $G$ which are to the left of $v$ in the MA ordering, for all $v\in V$. 
It follows from the definition of $E_k$ that $v_1, \ldots, v_k$ are isolated vertices of $G'$.
Since $q \le k$ and $V_{q}\subseteq X$, the correctness of the algorithm is guaranteed by the following claim.

\begin{claim} 
After iteration $i$ of the algorithm we have
	\begin{equation}
		\label{eq:G' back degree}
		d^-_{G'} (t') \ge m - i + |N^-_{t'} \cap W_i'| + |N^-_{t'} \cap X| - q,
	\end{equation}
	for all $1\leq i< m$ and $t \in W_i$.
\end{claim}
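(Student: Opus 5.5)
The plan is to prove the claim by induction on $i$, using two facts throughout. First, $d^-_{G'}(v)\ge d^-_G(v)-k$ for every $v$, since $E_k$ removes at most $k$ left edges at each vertex. Second, by Lemma \ref{lem:maxback}, $v_1,\dots,v_n$ is also an MA ordering of $G'$. I will read ``left neighbour'' in the algorithm as a left neighbour via an edge of $G'$, which is what is needed for $H$ to contain the copy of $T$.

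\emph{Base case $i=1$.} Here $W_1'=\{r'\}$ and $r'$ is the rightmost vertex of $V-X$. Every neighbour of $r'$ to its right therefore lies in $X-N^-_{r'}$, so
\[
d^-_G(r')\ge d(r')-|X|+|N^-_{r'}\cap X|.
\]
Combining this with (\ref{degreebound}) and subtracting $k$ gives
\[
d^-_{G'}(r')\ge m-1+|N^-_{r'}\cap X|-q.
\]
Since $N^-_{r'}\cap W_1'=\emptyset$, this is (\ref{eq:G' back degree}) for $i=1$.

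\emph{Inductive step, old vertices.} Assume (\ref{eq:G' back degree}) holds after iteration $i-1$, and let $s'$ be the vertex added at iteration $i$. For a vertex $t\in W_{i-1}$, going from $i-1$ to $i$ changes the right-hand side of (\ref{eq:G' back degree}) by $-1+[s'\in N^-_{t'}]\le 0$. The left-hand side does not change, so the inequality persists.

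\emph{Inductive step, the new vertex $s'$.} This is the main step. Let $s'=v_a$ and let $t'=v_b$ with $a<b$. By the choice of $s'$ as the rightmost free left neighbour of $t'$, every $G'$-neighbour of $t'$ strictly between positions $a$ and $b$ is non-free, so it lies in $W'_{i-1}\cup X$. Using the MA property of $G'$ at index $a$, together with the induction hypothesis for $t'$, we get
\begin{align*}
d^-_{G'}(s') &\ge d_{G'}(t',V_{a-1}) \\
&\ge d^-_{G'}(t')-1-\bigl|N^-_{t'}\cap W'_{i-1}\cap\{v_{a+1},\dots,v_{b-1}\}\bigr|-\bigl|N^-_{t'}\cap X\cap\{v_{a+1},\dots,v_{b-1}\}\bigr| \\
&\ge m-i+|N^-_{s'}\cap W'_{i-1}|+|N^-_{s'}\cap X|-q.
\end{align*}
The $-1$ in the second line accounts for $s'$ itself. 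The last line holds because the vertices of $N^-_{t'}$ at positions before $a$ are exactly $N^-_{s'}$. Finally, $N^-_{s'}\cap W_i'=N^-_{s'}\cap W'_{i-1}$, which gives (\ref{eq:G' back degree}) for $s'$. The MA inequality needs $a\ge 2$, and here $a>q$ because $s'\notin X$; the degenerate small-index case (for instance $a\le k$, where $s'$ would be isolated in $G'$) cannot arise, because $s'$ is a $G'$-neighbour of $t'$.

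I expect the main obstacle to be this last chain of inequalities. The difficulty is making sure the bookkeeping is exact: the discarded neighbours of $t'$ strictly between $s'$ and $t'$ must be precisely the ones that drop out when $N^-_{t'}$ is replaced by $N^-_{s'}$ in both the $W'$ term and the $X$ term. Once that accounting is verified, the rest of the induction is immediate.
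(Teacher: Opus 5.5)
Your proof is correct and follows the paper's argument essentially step for step. It uses the same base-case count (everything to the right of $r'$ lies in $X$), the same observation that the right-hand side cannot increase for old vertices, and the same MA comparison of $s'$ with $t'$ in $G'$, where your discarded neighbours together with $s'$ form exactly the paper's set $Y$; your reading of ``left neighbour'' as a neighbour in $G'$ is the one the argument needs, and your bound is slightly more careful than the paper's displayed identity for $|Y|$, which is really only an upper bound.
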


\begin{proof}
	The proof is by induction on $i$.
	After the first iteration we have $W_1=\{r\}$ and $\phi(r)=r'$.
	Suppose there are $\alpha$ vertices to the right of $r'$ in $G$. Each of these vertices belongs to $X$ by the choice of $r'$. 
	Since $r'$ has at most $k$ left neighbours in $G_k$ by the definition of $E_k$, we have $d^-_{G'} (r') \ge d_G (r') - \alpha - k$.
	Moreover, we have
	$|N^-_{r'} \cap W_1'| = 0$ and
	$|N^-_{r'} \cap X| = |X| - \alpha$. Thus \eqref{degreebound} gives
	$$d^-_{G'} (r') \ge k+m-1+|X|-q - \alpha - k = m-1+|N^-_{r'} \cap W_1'|+|N^-_{r'} \cap X|-q.$$
	Therefore  \eqref{eq:G' back degree} holds for $i = 1$.
	
	Next suppose $2 \le i < m$ and \eqref{eq:G' back degree} holds up to $i-1$.
	First observe that every $u \in W_{i - 1}$ satisfies \eqref{eq:G' back degree}, since $u$ satisfies \eqref{eq:G' back degree} with respect to $i-1$ by induction, and
    $|N^-_{u'} \cap W_{i}'|\leq |N^-_{u'} \cap W_{i-1}'| + 1$.
    Hence it suffices to show that \eqref{eq:G' back degree} is satisfied for the vertex $s \in W_i - W_{i - 1}$.
	Let $t \in W_{i - 1}$ be the neighbour of $s$ in $T[W_{i-1}]$.
	Then 
    $s'$ is the rightmost free left neighbor of $t'$ at the start of iteration $i$.
	Let $Y$ denote the left neighbours of $t'$ in $G'$
    which do not precede $s'$ in the ordering.
	Then every vertex in $Y$ must be in either $W_i'$ or $X$.
	Hence 
	\begin{equation}
		\label{w}
		|Y|= (|N^-_{t'} \cap W_i'| - |N^-_{s'} \cap W_i'|) + (|N^-_{t'} \cap X| - |N^-_{s'} \cap X|).
	\end{equation}
	Lemma \ref{lem:maxback} implies that the MA ordering of $G$
	is also an MA ordering of $G'$.
	Thus we can apply (\ref{eq:max-back criteron}) to $s'$ and $t'$ in $G'$
	to obtain
	\begin{equation}
		\label{MAappl}
		d^-_{G'}(s') \geq d^-_{G'}(t') - |Y| .
	\end{equation}
	By using (\ref{w}), (\ref{MAappl}), and that $t'$ satisfies \eqref{eq:G' back degree}, a simple calculation gives that $s'$ also satisfies \eqref{eq:G' back degree}, as claimed.
\end{proof}

It follows that the algorithm is indeed able to
assign a vertex of $V - X$ to each vertex of $T$ and find
a removable copy of $T$ in $H$.
This completes the proof of the theorem.
\end{proof}

We next show how Theorem \ref{thm:removable subtree missing X} implies Theorem \ref{thm:removable forests} on removable forests $F$ of order $m$. In what follows we shall assume, without loss of generality, that
each component of $F$ has at least two vertices.


\begin{proof}[Proof of Theorem \ref{thm:removable forests}]
	As we noted above, the $\omega = 1$ case is equivalent to Theorem \ref{thm:Hasunuma conjecture} and follows from Theorem \ref{thm:removable subtree missing X} by choosing $X = \emptyset$ and $q=0$. Let us consider the case $\omega = 2$. 
    Let $T_1, T_2$ be the connected components of $F$ with order
    $m_i$, $i=1,2$. Thus $m=m_1+m_2$. Let us
    apply Theorem \ref{thm:removable subtree missing X} to $G$ with
    $T_1$, $X = \emptyset$, $q=0$.
    Since $m_1\leq m-1$, the degree condition is satisfied, and we obtain
    a removable subtree $T_1' \cong T_1$ in $G$. Next we apply 
    Theorem \ref{thm:removable subtree missing X} 
    to $G - E(T_1')$ with $T_2$, $X = V(T_1')$ and $q=1$.
    Again, the degree condition holds for all $v\in V-X$, and we obtain 
    a removable subtree $T_2' \cong T_2$ in $G-X$.
    It follows that $F' = T_1' \cup T_2'$ is the required removable forest in $G$.
	
	For the $\omega = 3$ case, let $T_1, T_2, T_3$ be the connected components of $F$ with order
    $m_i$, $1\leq i\leq 3$. Thus $m=m_1+m_2+m_3$. 
    Let $T_{12}$ be the tree obtained from $T_1$ and $T_2$ by adding an edge $e$ connecting them. First we apply Theorem \ref{thm:removable subtree missing X} to $G$ with $T_{12}$,
    $X = \emptyset$, $q=0$. Since $m_3\geq 2$, the degree condition is satisfied and we obtain  
    a removable subtree $T_{12}' \cong T_{12}$ in $G$.
    Let $T_1', T_2' \subseteq T_{12}'$ be the subtrees corresponding to $T_1$ and $T_2$, and let $e'$ be the edge corresponding to $e$.
    Next we apply Theorem \ref{thm:removable subtree missing X} to 
    $G'=G - E(T_1')-E(T_2')$ with $T_3$, $X = V(T_1')\cup V(T_2')$ and $q\geq 2$.
 Note that $X$ contains a clique of size $2$ in $G'$ as $G'[X]$ contains $e'$.
    Hence there exists a removable subtree $T_3' \cong T_3$ in $G'-X$. Then $F' = T_1' \cup T_2' \cup T_3'$ is the required removable forest in $G$.

    Finally, we consider the case $\omega=4$.
    Let $T$ be a designated component of $F$ and let
    $F_i$, $1\leq i\leq 3$ be the other components.
    By applying the $\omega=2$ case of the theorem to the forest $F^+$ obtained
    from $F-V(T)$ by adding an edge $e$ that connects
    $F_1$ and $F_2$, we obtain a copy $F''$ of $F-V(T)$ in $G$ for which $G'=G-E(F'')$ is $k$-(edge-)connected and
    $X=V(F'')$ induces the edge $e'=x_1x_2$ corresponding to edge
    $e$ in $G'$.  
   Let 
   $m' = |V(T)|$ and $m'' = |V(F'')|$. 
   Thus $m=m'+m''$. Since $m'\geq 2$, there exists a leaf $t$
   and a vertex $r$ adjacent to $t$ in $T$.

Let $x_1,x_2,v_3,\dots,v_n$ be an MA ordering of $G'$.
    By running 
	the algorithm described in the proof of Theorem \ref{thm:removable subtree missing X} 
    with target tree 
    $T-t$ (of order $m'-1$), forbidden set $X$, and $q=2$,
    choosing this MA ordering of $G'$ and root vertex $r$ in $T-t$, 
    we obtain a subtree
    $T' \cong T - t$ of $G'-X$ for which $G'-E(T')$ is
    $k$-(edge-)connected.
      If $r$ has an additional free left neighbor $u$ after identifying $T'$, we can set $T'' = T' + ru$ and $F' = F'' + T''$, and we are done.
      Suppose $r$ has no free left neighbours.
      Let $X' = X - \{x_1, x_2\}$.
	Then $r$ must have degree $d_G (r) = k + m - 4 = k + (m'' - 2) + (m' - 2)$, and must be adjacent to all $m'' - 2$ vertices of $X'$, all $m' - 2$ vertices of $V(T') - r$, and an additional $k$ left neighbors in $V - V(T') - X'$ in $G$. Since $k \ge 3$, the left neighbor $u$ of $r$ in $F_k$ is different from $x_1$ and $x_2$. We again set $T'' = T + ru$ and $F' = F'' + T''$. We claim that $T''$ is a removable subtree of $G'$, which will complete the proof of the $\omega=4$ case.
	
	Since $|X| = m'' \ge 6$ and $|X'| = |X| - 2$, we have $X' \ne \emptyset$. For each $x \in X'$ we have $d_{G'} (x, V - X') \ge k + m - 4 - (|X'| - 1) = k + m' - 1 \ge k + 1$. Therefore $r$ is a leaf in $F_k$ since every vertex to its right belongs to $X'$ and hence has at least $k$ neighbors to the left of $r$. Then $F_k - ru + rx$ is a maximal forest in $G'-E(F_k)$
    for any $x \in X'$, so $G' - E(T'')$ is $k$-(edge-)connected by Lemma \ref{lem:swap}, as claimed.
\end{proof}

We close this section with another generalization of Theorem \ref{thm:Hasunuma conjecture}.
Let $\kappa_G(u,v)$ (resp. $\lambda_G(u,v)$) denote the maximum number of pairwise internally dijoint (resp. edge-disjoint) $u$-$v$ paths in $G$.
A spanning subgraph $H=(V,F)$ of a graph $G=(V,E)$ is called
a {\it local $k$-connectivity certificate} if
$$\kappa_H(u,v)\geq \min \{\kappa_G(u,v), k\}$$
for all $u,v\in V$. The definition of a {\it local $k$-edge-connectivity certificate} is similar,
by using $\lambda$ in place of $\kappa$. 
Note that if $G$ is $k$-(edge-)connected, then a local $k$-(edge-)connectivity certificate is a $k$-(edge-)connected spanning subgraph.
It is known that Lemma \ref{lem:maxback} holds in a more general form: $G_k=(V,E_k)$ is a local $k$-(edge-)connectivity certificate for all graphs $G$, 
see \cite{FIN,NIbook}.
Thus our results hold in this more general form, too.
In particular, we obtain the following strengthening of Theorem \ref{thm:Hasunuma conjecture}.

\begin{theorem}
	Let $G$ be a graph, and let $T$ be a tree of order $m$. If $\delta(G) \ge k + m - 1$, then $G$ contains a subtree $T' \cong T$ such that $G - E(T')$ is a local $k$-(edge-)connectivity certificate.
\end{theorem}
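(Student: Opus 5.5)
We rerun the proof of Theorem \ref{thm:removable subtree missing X} with $X=\emptyset$ and $q=0$, changing only the final step where Lemma \ref{lem:maxback} is invoked. The tree-embedding algorithm and the Claim never use that $G$ is $k$-(edge-)connected. They use only three facts: the MA ordering of $G$ is also an MA ordering of $G'=G-E_k$; each vertex has at most $k$ left neighbours in $G_k$, so $d^-_{G'}(v)\ge d^-_G(v)-k$; and the degree bound \eqref{degreebound}, which here reads $\delta(G)\ge k+m-1$. All three hold for an arbitrary graph $G$.

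First take an MA ordering $v_1,\dots,v_n$ of $G$ and form $E_k$ as in Section \ref{sec:vertex orderings of graphs}. The ordering is still an MA ordering of $G-E_k$. The statement of Lemma \ref{lem:maxback} assumes $k$-connectivity, but this part of its proof does not: each vertex loses its leftmost $\min\{k,d^-_G\}$ left neighbours, and the comparison \eqref{eq:max-back criteron} is preserved exactly as in the cited argument.

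Next run the algorithm with $H=G'=G-E_k$. Start with the rightmost vertex as the root image, and at each step send a new tree vertex $s$ to the rightmost free left neighbour of $t'$ in $G'$. The Claim, with $X=\emptyset$ and $q=0$, gives $d^-_{G'}(t')\ge m-i+|N^-_{t'}\cap W_i'|$. Its proof goes through verbatim: the base case uses $d_G(r')\ge k+m-1$ and that $r'$ is rightmost, and the inductive step uses only \eqref{MAappl} and the counting identity \eqref{w}. So a free left neighbour always exists, and we obtain $T'\cong T$ with $E(T')\cap E_k=\emptyset$. Hence $G-E(T')\supseteq G_k$.

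Finally, apply the generalized form of Lemma \ref{lem:maxback} quoted before the statement: $G_k$ is a local $k$-(edge-)connectivity certificate of $G$ for every graph $G$. Since $G_k\subseteq G-E(T')\subseteq G$, local connectivities are monotone under adding edges, so
$$\kappa_{G-E(T')}(u,v)\ge\kappa_{G_k}(u,v)\ge\min\{\kappa_G(u,v),k\}$$
for all $u,v$, and the same holds with $\lambda$ in place of $\kappa$. Thus $G-E(T')$ is a local certificate. The only step that needs real care is confirming that no part of the Claim's proof uses connectivity of $G$. That check is routine, since connectivity enters the original proof only through Lemma \ref{lem:maxback} at the very end.
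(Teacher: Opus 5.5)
Your proposal is correct and follows the paper's own argument. The paper likewise notes that $k$-(edge-)connectivity enters the proof of Theorem~\ref{thm:removable subtree missing X} (with $X=\emptyset$, $q=0$) only through Lemma~\ref{lem:maxback}, and replaces that lemma by the known fact that $G_k$ is a local $k$-(edge-)connectivity certificate of every graph $G$. The only difference is that you spell out the routine checks the paper leaves implicit: that the MA ordering is also an MA ordering of $G-E_k$, that the Claim holds for arbitrary $G$, and that the certificate property passes from $G_k$ to the supergraph $G-E(T')$.
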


\section{Removable matchings}
\label{sec:removable matchings}

In this section we prove Theorems \ref{thm:removable 2-matching} and \ref{thm:matching size}.
The {\it degree} $d_G(X)$ of a vertex set $X$ in a graph $G$ is the number of edges from $X$ to $V-X$ in $G$.
In the proof of the next lemma we shall use the well-known fact that the degree function of a graph satisfies the submodular inequality. 
Namely, for all pairs $X,Y\subseteq V$ of vertex sets in a graph
$G=(V,E)$ we have
\begin{equation}
%
\label{submod}
d_G(X)+d_G(Y) \geq  d_G(X\cap Y)+d_G(X\cup Y).
\end{equation}

\begin{lemma}
\label{lem:sub}
Let $G=(V,E)$ be a $k$-edge-connected graph and $U\subseteq V$ with $E(G[U])\not= \emptyset$.
Suppose that $G-e$ is not $k$-edge-connected for all $e\in E(G[U])$ and let $A\subsetneq V$ be a minimal
vertex set subject to $d_G(A)=k$ and that there exists an edge $e\in E(G[U])$ from $A$ to $V-A$.
Then for each edge $f=uv\in E(G[U])$ with $u\in A$ and $v\in V-A$, $u$ is an isolated vertex in $G[A\cap U]$.
\end{lemma}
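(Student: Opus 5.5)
The plan is to argue by contradiction, uncrossing $A$ with a tight cut that separates an edge of $G[A\cap U]$, and to obtain a smaller set contradicting the minimality of $A$. So suppose $f=uv\in E(G[U])$ with $u\in A$, $v\in V-A$, and suppose $u$ is not isolated in $G[A\cap U]$. Then there is an edge $g=uw\in E(G[U])$ with $w\in A\cap U$. By hypothesis $G-g$ is not $k$-edge-connected. Since $G$ is $k$-edge-connected, there is a set $B$ with $\emptyset\neq B\subsetneq V$, $d_G(B)=k$, and $g$ joining $B$ to $V-B$. Replacing $B$ by $V-B$ if needed, I may assume $u\in B$ and $w\notin B$.

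\emph{Case 1: $A\cup B\neq V$.} I apply the submodular inequality \eqref{submod} to $A$ and $B$:
$$2k=d_G(A)+d_G(B)\ge d_G(A\cap B)+d_G(A\cup B).$$
Both $A\cap B$ (which contains $u$) and $A\cup B$ are nonempty proper subsets of $V$. By $k$-edge-connectivity each term on the right is therefore at least $k$, which forces $d_G(A\cap B)=k$. Now $A\cap B\subsetneq A$, since $w\in A-B$. Moreover, the edge $g\in E(G[U])$ goes from $u\in A\cap B$ to $w\notin A\cap B$. This contradicts the minimality of $A$.

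\emph{Case 2: $A\cup B=V$.} I apply \eqref{submod} to $A$ and $V-B$, using $d_G(V-B)=d_G(B)$. This gives posimodularity:
$$2k\ge d_G(A-B)+d_G(B-A).$$
The set $A-B$ contains $w$ and misses $u$, so it is a nonempty proper subset of $A$. The set $B-A=V-A$ contains $v$, so it is nonempty, and it is proper because $A\neq\emptyset$. Hence both terms are at least $k$, and so $d_G(A-B)=k$. The edge $g$ joins $w\in A-B$ to $u\notin A-B$. Again $A-B$ is a smaller admissible set, contradicting the minimality of $A$.

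The only delicate point is choosing the orientation of $B$ and checking nonemptiness and properness of the uncrossed sets in each case. Once $u\in B$, $w\notin B$ is fixed, the split according to whether $A\cup B=V$ handles this cleanly, so I do not expect a real obstacle.
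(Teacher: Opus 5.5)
Your proof is correct and is essentially the paper's argument: take a tight set separating $u$ and $w$, and uncross it with $A$, obtaining $d_G(A\cap B)=k$ when $A\cup B\neq V$ and handling $A\cup B=V$ via the complement (the paper orients the tight set the other way, with $w\in X$). One simplification: in your Case 2, $A\cup B=V$ gives $A-B=V-B$, so $d_G(A-B)=d_G(B)=k$ is immediate and the posimodularity step is unnecessary; the paper dispatches this case directly in the same way.
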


\begin{proof}
For a contradiction suppose that $uw\in E(G[A\cap U])$. Since $G-uw$ is not $k$-edge-connected, there exists a set $X\subsetneq V$ with $d_G(X)=k$ and such that $uw$ leaves $X$. 
We may assume $w\in X$.
By the minimality of $A$ we have $V-(A\cup X)\not= \emptyset$.
Then (\ref{submod}) and $k$-edge-connectivity imply
$$k+k = d_G(A) + d_G(X) \geq d_G(A\cap X) + d_G(A\cup X) \geq k+k,$$
which implies that $d_G(A\cap X)=k$.
Since $u\in A-X$, it contradicts the minimality of $A$.
%
\end{proof}

Lemma \ref{lem:sub}, combined with a minimum degree lower bound, gives the following corollary.
Note that by putting $U=V$ 
we obtain one of the basic results in this area: every minimally $k$-edge-connected graph has a vertex of degree exactly $k$ \cite{Lick}.

\begin{lemma} \label{lem:removable edge avoiding vertices}
	Let $G = (V, E)$ be a $k$-edge-connected graph and let $U \subseteq V$. 
    Suppose $E(G[U]) \ne \emptyset$ and 
   \begin{equation}
    \label{eq:min degree of U}
		d(u) \ge k + \lceil{\frac{|V-U| + 1}2}\rceil
	\end{equation}
    for every $u \in U$. Then $G[U]$ contains an edge $e$ such that $G - e$ is $k$-edge-connected.
\end{lemma}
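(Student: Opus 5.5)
We argue by contradiction, using Lemma \ref{lem:sub}. Suppose that $G-e$ fails to be $k$-edge-connected for every $e\in E(G[U])$. Since $E(G[U])\neq\emptyset$, every such edge lies in some tight set (a set $Z\subsetneq V$ with $d_G(Z)=k$) that it leaves. We may therefore choose $A\subsetneq V$ minimal subject to $d_G(A)=k$ and that some edge of $G[U]$ goes from $A$ to $V-A$. Fix such an edge $f=uv$ with $u\in A\cap U$ and $v\in V-A$. By Lemma \ref{lem:sub}, $u$ has no neighbour in $A\cap U$. Hence every neighbour of $u$ lies in $(A-U)\cup(V-A)$.

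Next we count the edges at $u$. The edges from $u$ to $V-A$ are among the $k$ edges leaving $A$, so $d_G(u,V-A)\le k$. Combined with \eqref{eq:min degree of U}, this gives
$$|A-U|\;\ge\; d_G(u,A-U)\;\ge\; d(u)-k\;\ge\;\Bigl\lceil\tfrac{|V-U|+1}{2}\Bigr\rceil .$$

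To get the contradiction, we repeat the argument on the other side of the cut. Since $d_G(V-A)=k$ as well, we want a symmetric bound $|(V-A)-U|\ge\lceil(|V-U|+1)/2\rceil$, obtained from the endpoint $v\in (V-A)\cap U$ of an edge of $G[U]$ crossing the cut. Adding the two bounds would give $|V-U|\ge |A-U|+|(V-A)-U|\ge |V-U|+1$, which is impossible.

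The main obstacle is justifying the symmetric bound, because the minimality of $A$ is not symmetric under complementation. I would first try choosing $A$ minimal among all tight sets crossed by an edge of $G[U]$, where both $A$ and its complement $V-A$ are admissible. With this choice Lemma \ref{lem:sub} still applies to $A$, giving the first bound. For the complement side, pick a tight set $B\subseteq V-A$ that is minimal among those crossed by an edge of $G[U]$; for instance, take $V-A$ itself and shrink it. Such a set exists because $V-A$ itself is crossed by $f$. Applying Lemma \ref{lem:sub} to $B$, with $v'\in B\cap U$ the endpoint of a crossing $U$-edge, gives $|B-U|\ge\lceil(|V-U|+1)/2\rceil$. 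Since $A$ and $B$ are disjoint, $|A-U|+|B-U|\le|V-U|$, and the same contradiction follows.

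Lemma \ref{lem:sub} as stated takes a globally minimal set, so it does not directly apply to a set minimal only inside $V-A$. I would therefore check that its proof still works for $B$. That proof uses submodularity to produce a smaller tight set $A\cap X$ containing the crossing edge. When the argument is run inside $V-A$, the corresponding set is $B\cap X\subseteq B\subseteq V-A$, which stays inside $V-A$. So the same proof should give the required minimality contradiction for $B$.
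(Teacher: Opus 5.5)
Your argument is correct and is essentially the paper's proof. The paper applies Lemma \ref{lem:sub} once, to a minimal set $A$ taken on the side of a tight cut containing at most $\lfloor |V-U|/2\rfloor$ vertices of $V-U$. You instead apply it both to $A$ and to a minimal $B\subseteq V-A$ and add the two lower bounds; this is the same degree count, with only the bookkeeping differing.

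Your concern about minimality is unnecessary. Minimality in Lemma \ref{lem:sub} is inclusion-minimality, and a set minimal among the admissible subsets of $V-A$ has no admissible proper subset at all, so the lemma applies to $B$ verbatim. This locality is also exactly what justifies the paper's ``we may assume'' step.
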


\begin{proof}
Suppose no edge of $G[U]$ is removable in $G$. Then there exists a minimal
set $A\subsetneq V$ such that $d_G(A)=k$ and that there exists an edge $e\in E(G[U])$ from $A$ to $V-A$.
We may assume that $|A-U|\leq \lfloor \frac{|V-U|}{2}\rfloor$.
Let $f=uv\in E(G[U])$ with $u\in A$ and $v\in V-A$.
By Lemma \ref{lem:sub} $u$ is an isolated vertex in $G[A\cap U]$.
Hence $d_G(u) \le k + |A-U| \le k + \lfloor \frac{|V-U|}{2}\rfloor$, 
which contradicts \eqref{eq:min degree of U}.
\end{proof}

We are ready to prove the first main result of this section on removable matchings, which
extends the above mentioned theorem of Lick \cite{Lick} on the existence of a removable edge
in a $k$-edge-connected graph with minimum degree $k+1$ to
matchings of arbitrary size.
Although our focus is on $k$-edge-connected graphs, 
the high level approach is similar to that of \cite[Theorem 1.7]{CKP}.

\begin{proof} [Proof of Theorem \ref{thm:matching size}(a).]
	Let $M$ be a (possibly empty) removable matching of $G$, and suppose $s = |M| \leq m-1$. We shall show there exists a larger removable matching. Let $H = G - M$, $W = V(M)$, and $U = V - W$. Then $H$ is $k$-edge-connected and $|W| = 2s$. If $G[U]$ contains an edge then, since 
		$$\delta(G)
		\ge k + m
		= k + \lceil{\frac{(2m - 2) + 1}2}\rceil
		\ge k + \lceil{\frac{2s + 1}2}\rceil ,$$
	we can obtain a larger matching by adding an edge of $G[U]$ to $M$ by Lemma \ref{lem:removable edge avoiding vertices}.
		
	Suppose $G[U]$ contains no edge. Since $|U| = |V| - 2s \ge 2m - 2 (m - 1) = 2$, there exist distinct vertices $u,v \in U$. Then $N(u), N(v) \subseteq W$. If $|N(u) \cap V(e)| + |N(v) \cap V(e)| \le 2$ for every $e \in M$, then \(
		2 (k + m) \le 2 \delta(G) \le d(u) + d(v) \le |V(M)| = 2s \le 2m - 2 \)
	which is a contradiction. Thus, there exists $xy \in M$ such that $ux, vy \in E$.
	
	We claim that $H - \{u, v\}$ is $k$-edge-connected. Suppose not. Then there exists $\emptyset \ne X \subsetneq V - \{u, v\}$ such that $d_{H - \{u, v\}} (X) \le k - 1$. By symmetry we may assume $|X \cap W| \le |W| / 2 \le m - 1$. If $X$ contains some $z \in U$ then, since $U$ is independent, we have $d_G(z) \le |X \cap W| + k - 1 \le k + m - 2$, a contradiction. Otherwise $X \subseteq W$, in which case for any $z \in X$ we have $d_G(z) \le 2 + (|X| - 1) + (k - 1) \le k + m - 1$, which also contradicts our minimum degree assumption. Thus $H - \{u, v\}$ is indeed $k$-edge-connected, as claimed.
	
    
    Since $d(u), d(v) \ge k + 1$ and the addition of a new vertex of degree at least $k$ to a $k$-edge-connected graph preserves $k$-edge-connectivity, it follows that  $H - ux - vy$ is $k$-edge-connected. Thus $M \cup \{ux, vy\} - xy$ is a larger removable matching in $G$.
\end{proof}

Theorem \ref{thm:matching size}(b) will be proved by induction, using Theorem \ref{thm:removable 2-matching} as the base case, so we continue with the proof of this result. The $k$-connected version
of the statement was proved
earlier in
\cite[Theorem 1.6]{li2026halinsedgeremovabilitymatching}.
We shall use the following results, due to Mader.

\begin{theorem} \cite{M,M72}
\label{lem:vertices of degree k in minimally connected graphs}
	Suppose $G$ is a minimally $k$-(edge)-connected graph. Then $G$ has at least 
    $k+1$
vertices of degree $k$.
\end{theorem}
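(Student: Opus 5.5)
The statement is Mader's theorem: every minimally $k$-(edge-)connected graph $G$ has at least $k+1$ vertices of degree $k$. It is cited from \cite{M,M72}, so in the paper it is used as a black box. If I had to prove it, I would treat the edge and vertex versions separately. In both, the basic tool is that every edge $e$ is critical, so $e$ crosses a tight cut or separator.

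\emph{Edge version.} Every edge $e=uv$ leaves some set $X_e$ with $d_G(X_e)=k$. Among all tight sets (those $X$ with $d_G(X)=k$), take a family of minimal ones. The submodular inequality \eqref{submod} gives that two crossing tight sets have tight intersection and union, provided the union is not $V$. The plan is:
\begin{enumerate}
\item Fix an edge $e$ and a minimal tight set $A$ with $e$ leaving $A$.
\item Apply Lemma \ref{lem:sub} with $U=V$: the endpoint $u\in A$ of any cut edge is isolated in $G[A]$.
\item Conclude that $d(u)\le k$, so $d(u)=k$.
\end{enumerate}
This gives one degree-$k$ vertex inside every minimal tight set, and the same holds inside the complement of every tight set. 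To get $k+1$ such vertices, I would build a laminar (uncrossed) family of tight sets witnessing all edges, as in a tree-like decomposition. I would then count the leaf sets of this laminar family. Each leaf contains a degree-$k$ vertex. The number of leaves should be at least $k+1$, because a leaf cut of size $k$ consisting of critical edges forces distinct leaf regions at the far ends of its $k$ edges. Alternatively, I would argue by contradiction: if at most $k$ vertices have degree $k$, charge each edge's tight set to a low-degree vertex and sum degrees.

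\emph{Vertex version.} This is the hard part and the main obstacle. Criticality now gives separators: for $e=uv$, the graph $G-e$ has a $(k-1)$-separator $S$ that separates $u$ from $v$. Submodularity of $|N(X)|$ replaces \eqref{submod}, but crossing separators only uncross under side-size conditions. The standard route is Mader's cycle lemma: in a minimally $k$-connected graph, every cycle contains a vertex of degree $k$. The subgraph $G_{>k}$ induced by the vertices of degree greater than $k$ is then a forest. From this, a counting argument yields at least $k+1$ vertices of degree $k$:
\begin{enumerate}
\item Count edges between the degree-$k$ vertices and the forest $G_{>k}$, using $|E(G_{>k})|\le |V(G_{>k})|-1$.
\item Combine this with the requirement that every vertex of $G_{>k}$ has degree at least $k+1$.
\end{enumerate}
The cycle lemma itself needs a careful fragment and end analysis, and I expect it to be the crux. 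I would attempt it by taking a cycle with all degrees greater than $k$ and choosing fragments that are minimal relative to its edges, aiming for a contradiction.

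Since the paper only invokes the statement, in context I would simply cite \cite{M,M72}.
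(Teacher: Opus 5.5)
The paper does not prove this statement. It quotes it from Mader \cite{M,M72} and uses it as a black box, to obtain a third vertex of degree $k$ in the proof of Theorem \ref{thm:removable 2-matching}. So your closing decision to cite it matches the paper. The sketch you offer in its place, however, does not reach the bound $k+1$ in either version.

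\emph{Edge version.} Steps 1--3 are correct: with $U=V$, Lemma \ref{lem:sub} shows that a minimal tight set is a singleton $\{u\}$ with $d(u)=k$. Applied to a tight set and to its complement, this gives only Lick's two vertices. The step to $k+1$ is unsupported, and the reason you give is false: the far ends of the $k$ edges leaving a leaf need not be leaf regions, or even have degree $k$.

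\begin{itemize}
\item Take two triangles sharing a vertex $c$; this graph is minimally $2$-edge-connected. Every degree-$2$ vertex has the degree-$4$ vertex $c$ as a far end.
\item For the cycle $v_1v_2\dots v_n$, the chain $\{v_1\}\subset\{v_1,v_2\}\subset\dots\subset\{v_1,\dots,v_{n-1}\}$ is a laminar family of tight sets witnessing every edge. Yet it has only one minimal member, and only two leaves in its tree representation.
\item If you take a maximal laminar family instead, its minimal members are exactly the singletons of degree-$k$ vertices. Then ``at least $k+1$ leaves'' merely restates the theorem.
\end{itemize}

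The charging alternative is not yet an argument.

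\emph{Vertex version.} You leave the cycle lemma unproved, but you may cite it: it is Theorem \ref{thm:cycles in k-connected graphs}, since every edge of a minimally $k$-connected graph is $k$-essential. The real gap is the count.

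Let $D$ and $B$ be the sets of vertices of degree $k$ and of degree greater than $k$. Your double count gives $k|D|\ge e(D,B)\ge (k+1)|B|-2(|B|-1)$, which is Mader's bound $|D|\ge \frac{(k-1)n+2}{2k-1}$. For $k\ge 2$ this exceeds $k$ only when $n\ge 2k+1$.

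No argument using only the forest property can do better. $K_4-e$ is $2$-connected and every cycle passes through one of its two degree-$2$ vertices, yet only $k=2$ vertices have degree $k$. For even $k$, $K_{k+2}$ minus $k/2$ disjoint edges inside a $k$-set behaves the same way. So minimality must be used a second time. One way to do it:

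\begin{itemize}
\item A vertex $x\in B$ with at most one neighbour in the forest $G[B]$ has at least $k$ neighbours in $D$, so $|D|\ge k$.
\item Suppose $|D|=k$. Then $N(x)=D\cup\{y\}$ for some $y\in B$. Take $S$ with $|S|=k-1$ separating $x$ from $y$ in $G-xy$.
\item Every vertex of $D-S$ is adjacent to $x$, so the component $C_y$ of $y$ in $G-xy-S$ lies inside $B$.
\item If $C_y=\{y\}$, then $d(y)\le k$.
\item Otherwise the forest $G[C_y]$ has a vertex $z\neq y$ with at most one neighbour in $C_y$, and then $d(z)\le 1+(k-1)=k$.
\end{itemize}

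Either way this contradicts the definition of $B$.
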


\begin{proof}[Proof of Theorem \ref{thm:removable 2-matching}]
	Let $e = xy$ be an edge of $G$ for which $H = G - e$ is $k$-(edge-)connected. First suppose that
    $d_H (x) = d_H (y) = k$. Theorem \ref{lem:vertices of degree k in minimally connected graphs} and the assumption $k\geq 2$ imply that
    $G$ has at least 3 vertices of degree $k$. Therefore there exists an edge $f$ in $G-e$
    for which $G-\{e,f\}$ is $k$-(edge-)connected. As $f$ must be disjoint from $e$, the theorem follows.

    Hence we may assume that $d_H (y) \ge k + 1$.  Let $x, v_2, \ldots, v_n$ be a maximum adjacency ordering of $H$ starting with $x$, in which $y$ appears as early as possible. 
    If $v_n = y$, then we must have $d_H^- (v_{n - 1}) > d_H (y, V_{n - 2}) \ge k$. In this case, $v_{n - 1}$ is incident to a removable edge going to the left which is not adjacent to $e$. These edges together form the required removable matching of size two.

Finally, suppose that  $v_n \ne y$.
Then $d^-_H (v_n) \ge k + 1$.
Let $f=wv_n$ be an edge incident with $v_n$ in $G-E_k$. 
Then $G-f$ is $k$-(edge-)connected and 
$w\not= x$ 
by the definition of $E_k$,
Lemma \ref{lem:maxback}, and the choice of 
the ordering.
%
    If $w\not=y$, we are done. 
    Otherwise $f = yv_n$. Let $g$ be the edge incident to $v_n$ in $F_k$. Since $v_n$ is a leaf of $F_k$, 
    $F_k - g + f$ is a maximal forest in $G - E_{k - 1}$. Thus $H-g$ is $k$-(edge-)connected by Lemma \ref{lem:swap}, and $g$ cannot be adjacent to $e$ since $k \ge 2$ and $F_1$ contains all edges incident to $x$.
    Hence $M=\{e,g\}$ is a removable matching of size two in $G$.
\end{proof}

Note that 
the proof 
is substantially shorter than the proof given in 
\cite{li2026halinsedgeremovabilitymatching}
for $k$-connected graphs.

We next verify the second part of Theorem \ref{thm:matching size}, which improves on the first
part in the case when $k\geq 2$ and $\delta(G)\leq 2k-1$.
We recall a further result of Mader.

\begin{theorem} \cite{M86}
\label{thm:k-EC removable vertex}
	Every $k$-edge-connected graph $G$ with $\delta(G) \ge k + 1$ contains a vertex $v$ such that $G - v$ is $k$-edge-connected.
\end{theorem}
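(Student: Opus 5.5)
The plan is the standard ``minimal fragment'' argument for vertex-removability, using only $k$-edge-connectivity, the submodular inequality \eqref{submod}, and $\delta(G)\ge k+1$.

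\emph{Set-up.} Suppose, for a contradiction, that $G-v$ is not $k$-edge-connected for every $v\in V$. Call a pair $(X,v)$ a \emph{fragment} if $v\notin X$, $\emptyset\ne X\subsetneq V-v$ and $d_{G-v}(X)\le k-1$. Every vertex lies in some fragment pair as the removed vertex, so fragments exist. Choose a fragment $(X,v)$ with $|X|$ minimum, and put $Y=V-X-v$; then $(Y,v)$ is also a fragment. Two facts follow at once.
\begin{enumerate}
\item Since $d_G(X)\ge k$ while $d_{G-v}(X)\le k-1$, the vertex $v$ has neighbours in $X$. Symmetrically, $v$ has neighbours in $Y$.
\item $|X|\ge 2$. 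Indeed, if $X=\{x\}$, then $d_{G-v}(x)\ge d_G(x)-1\ge k$, a contradiction. This is exactly where $\delta(G)\ge k+1$ is used, and the same estimate shows that no single vertex forms a fragment side.
\end{enumerate}

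\emph{Crossing step.} Pick $w\in X$, chosen if possible as a neighbour of $v$, and take a fragment $(Z,w)$ with $W=V-Z-w$. Since $(W,w)$ is also a fragment, after swapping $Z$ and $W$ we may assume $v\in W$ (note $v\ne w$). Now $V-\{v,w\}$ is partitioned into the four corners $X\cap Z$, $X\cap W$, $Y\cap Z$, $Y\cap W$, where $X\cap W$ excludes $v$ and $Y\cap W$ excludes $v$ as well. Work in $H=G-\{v,w\}$ and account separately for the edges at $v$ and at $w$. Writing $d_{G-v}(X)$ and $d_{G-w}(Z)$ as $d_H$-values plus the contributions of the edges at $w$ (respectively $v$), we apply \eqref{submod} in $H$ to the pairs $(X-w,\,Z)$ and $(X-w,\,W-v)$. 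Adding the resulting inequalities gives
\[
d_{G-v}(X)+d_{G-w}(Z)\ \ge\ d_{G-v}(X\cap Z)+d_{G-w}(Y\cap W)\quad\text{or}\quad d_{G-v}(X\cap W)+d_{G-w}(Y\cap Z),
\]
after choosing the right diagonal according to where the neighbours of $v$ and $w$ lie. From this, one of the corners $X\cap Z$ or $X\cap W$, if nonempty, is a fragment (with respect to $v$ or to $w$) strictly smaller than $X$, since it misses $w\in X$. That contradicts the minimality of $X$.

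\emph{Main obstacle.} The difficulty is the degenerate case where the corner we want is empty, or where the edge counts at $v$ and $w$ do not split favourably between the diagonals. First I would show that both $X\cap Z$ and $X\cap W$ cannot be empty, since then $X=\{w\}$, contradicting $|X|\ge2$. If only the useful corner is empty, I would use the opposite diagonal together with the bound $d_{G-v}(u)\ge k$ for single vertices $u$; this bound is again guaranteed by $\delta\ge k+1$ and rules out small corners being ``cheap'' cuts. The delicate bookkeeping is the edge $vw$ (present or not) and the edges from $v$ and $w$ into the corners. I expect to handle this by choosing $w$ adjacent to $v$ where possible, and otherwise by comparing the two diagonal inequalities. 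Once every case yields a fragment smaller than $X$, the contradiction shows some $G-v$ is $k$-edge-connected.
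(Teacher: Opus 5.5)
The paper does not prove this statement; it quotes it from Mader. Your argument therefore has to stand on its own, and it breaks at the crossing step.

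First, your displayed inequality has the deleted vertices swapped. Edges from $X\cap Z$ to $w$, and from $Y\cap W$ to $v$, are counted neither in $d_{G-v}(X)$ nor in $d_{G-w}(Z)$. So the valid uncrossing is
\[
d_{G-w}(X\cap Z)+d_{G-v}(Y\cap W)\le d_{G-v}(X)+d_{G-w}(Z)\le 2k-2 .
\]
For the other diagonal there is no inequality of this kind at all, because the edges from $X\cap W$ to $v$ and to $w$ are both uncounted on the right.

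Second, and more seriously, even the valid inequality only tells you that $(X\cap Z,w)$ or $(Y\cap W,v)$ is a fragment. In Lemma~\ref{lem:sub} the analogous step works because $k$-edge-connectivity of $G$ bounds the second corner from below by $k$. Here the cuts live in $G-v$ and $G-w$, which are not $k$-edge-connected (that is exactly your contrary hypothesis). So nothing prevents the small cut from being the $Y$-side corner, and a $v$-fragment inside $Y$ does not contradict the minimality of $|X|$. You do get a contradiction when $X\cap Z\neq\emptyset=Y\cap W$, since then $(W,w)$ is a fragment with $|W|=|X|-|X\cap Z|<|X|$. But two cases remain open: $X\cap Z=\emptyset$, and $(Y\cap W,v)$ being the fragment. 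No bookkeeping at $v$ and $w$ can close them.

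Here is a concrete configuration. Let $k=2$, and let $G$ consist of two disjoint copies of $K_4$, on $\{v,w,b,c\}$ and $\{v',w',b',c'\}$, together with a vertex $z$ joined to $v,w,v',w'$.
\begin{enumerate}
\item $G$ is $2$-edge-connected with $\delta(G)=3$.
\item No fragment has at most two vertices: a vertex keeps degree at least $2$ after another vertex is deleted, and a pair has $d_G\ge 4$ and loses at most two edges.
\item Hence $(X,v)$ with $X=\{w,b,c\}$, where $d_{G-v}(X)=1$, is a fragment of minimum size.
\item The vertex $w\in X$ is adjacent to $v$, and $vz$ is a bridge of $G-w$. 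This gives the fragment $(Z,w)$ with $Z=\{z,v',w',b',c'\}=Y$ and $W=\{v,b,c\}$.
\item Here $X\cap Z=Y\cap W=\emptyset$, and the only corner inside $X$ is $\{b,c\}$. Its cut has size $2=k$ in both $G-v$ and $G-w$, so it is not a fragment.
\item Also $d_{G-v}(X)+d_{G-w}(Z)=2$, whereas $\{b,c\}$ and $Y$ have cut values summing to $3$ however the deletions are assigned. So your fallback ``opposite diagonal'' inequality fails as well.
\end{enumerate}
Everything your argument uses holds here: minimality of $|X|$ over all fragments, $|X|\ge 2$, $\delta\ge k+1$, and $w$ adjacent to $v$ and non-removable. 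Yet no smaller fragment arises. The theorem holds in this graph only because $b$ and $c$ are removable.

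A proof must therefore exploit the non-removability of further vertices (here $b$ and $c$) in a global way. That is the genuine content of Mader's theorem, and your proposal does not supply it.
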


\begin{proof}[Proof of Theorem \ref{thm:matching size}(b).]
	We proceed by induction on $m$. The statement is true for the base case $m=1$ by Theorem \ref{thm:removable 2-matching}. Let $G$ be a $k$-edge-connected graph with 
    $\delta(G) \ge k+m$, for some $2\leq m\leq k-1$. By Theorem \ref{thm:k-EC removable vertex}, there exists a vertex $v$ of $G$ such that $H = G - v$ is $k$-edge-connected. Then 
    $\delta(H) \ge \delta(G) - 1 \ge k+m - 1$, and hence
there exists a removable matching $M$ in $H$ of size $m$ by induction. Since $m\le k - 1$, we have
	\begin{equation}
		\delta(G) \ge k+m \geq 2m + 1 ,
	\end{equation}
	so there exists a neighbour $u$ of $v$ in $G$ such that $u \notin V(M)$. Since adding a vertex of degree at least $k$ to a $k$-edge-connected graph preserves $k$-edge-connectivity, $G - M - uv$ is $k$-edge-connected. Thus $M \cup e$ is a removable matching of $G$ of size $m + 1$, as required.
	\end{proof}

\section{Removable matchings of size three}
\label{sec:removable 3-matching}

In this section, we explore removable 3-matchings in more detail.
It is known that if $G$ is a $k$-connected graph on at least six vertices with
$\delta(G)\geq k+2$, then $G$ has a removable matching of size three, 
see \cite[Theorems 1.7, 1.9]{li2026halinsedgeremovabilitymatching}
(for $k\leq 3$) and  
\cite[Corollary 5.1]{CKP} (for $k\geq 4$).
Theorem \ref{thm:matching size}(b) implies a similar result for $k$-edge-connected graphs,
assuming $k\geq 3$.
We extend this result to the $k=2$ case.
We also give a
new, shorter proof for the above mentioned results concerning $k$-connected graphs, for $k\geq 3$.
Both proofs rely on MA orderings.

\begin{theorem} \label{thm:f'(2,4)}
Let $G=(V,E)$ be a 2-edge-connected graph with $|V|\geq 6$ and $\delta(G)\geq 4$. Then $G$ contains a matching $M$ with
$|M|=3$
such that $G-M$ is 2-edge-connected.
\end{theorem}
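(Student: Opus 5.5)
Throughout, $k=2$ and $\delta(G)\ge 4$, and we follow the MA-ordering strategy of the proof of Theorem \ref{thm:removable 2-matching}. Fix an MA ordering $v_1,\dots,v_n$ of $G$. Let $F_1,F_2$ be the first two forests of the ordering and $E_2=E(F_1)\cup E(F_2)$. By Lemma \ref{lem:maxback} every subset of $E-E_2$ is removable, so it suffices to find three pairwise disjoint edges that avoid $E_2$, or that can be made to avoid it by the swap of Lemma \ref{lem:swap}.

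\textbf{Step 1: abundance of removable edges.} For $i\ge 3$, each $v_i$ has at most two left edges in $E_2$. The MA property gives that the back degree $d^-_G(v_i)$ is large near the right end of the ordering. In particular, $v_n$ has all its $d(v_n)\ge 4$ neighbours to its left, so at least two of its edges lie in $G-E_2$. More generally, every vertex $v_i$ with $d^-_G(v_i)\ge 3$ is incident to a removable left edge.

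\textbf{Step 2: the greedy choice.} We pick edges greedily from the right, mimicking the tree algorithm of Theorem \ref{thm:removable subtree missing X}. Take $e_1=v_nw_1\in E-E_2$. Then take the rightmost vertex $u\notin\{v_n,w_1\}$ with a left edge in $G-E_2$ avoiding $v_n,w_1$, call it $e_2$, and repeat to get $e_3$. To show this succeeds, we count back degrees in $G'=G-E_2$ with the inequality \eqref{MAappl}. Since $G'$ inherits the MA ordering, $d^-_{G'}(v_{n-1})\ge d^-_{G'}(v_n)-1\ge 1$, and similar estimates hold for further vertices. The point is that $\delta\ge 4$ yields enough slack in $G'$ on the last few vertices, namely $\sum_i d^-_{G'}(v_i)=|E|-|E_2|\ge 2n-2n+3=3$. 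This total is small, however, so a pure counting argument is delicate.

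\textbf{Step 3: the bad case (main obstacle).} This step is the hard part. The greedy choice can fail when all edges of $G-E_2$ are concentrated on two or three vertices, for instance when they form a star or a triangle. In that case there are very few removable edges outside $E_2$. We handle it in two ways.

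First, we use Lemma \ref{lem:swap}: if $v$ is a leaf of $F_2$ whose $G-E_2$ edge $f$ conflicts with a chosen edge, we replace the $F_2$-edge $g$ at $v$ by $f$, making $g$ removable instead, exactly as in the proof of Theorem \ref{thm:removable 2-matching}. Choosing $g$ disjoint from the other chosen edges requires $|V|\ge 6$ and a careful look at where the $F_2$-edges of the last vertices go.

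Second, where swapping is not enough, we fall back on the structural tools. Lemma \ref{lem:removable edge avoiding vertices} with $U=V-V(M)$ for a removable $2$-matching $M$ (from Theorem \ref{thm:removable 2-matching}) needs degree $k+\lceil 5/2\rceil=5$, one more than we have. So we must exploit equality: if it fails, a tight $2$-edge cut $A$ as in Lemma \ref{lem:sub} exists, and $u$ is isolated in $G[A\cap U]$ with $d(u)\le 2+|A-U|$. This forces $d(u)=4$ and $A\cap V(M)$ to have size exactly $2$.

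In this extremal configuration we then reroute as in the proof of Theorem \ref{thm:matching size}(a): replace an edge $xy$ of $M$ by two edges $ux,vy$ through vertices of $U$. We then verify $2$-edge-connectivity directly, using that a degree-$4$ vertex losing one edge keeps degree $3\ge 2$. We expect the case analysis on how the two tight cuts interact with $M$ to be the most laborious part.
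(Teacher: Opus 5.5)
Your proposal has a genuine gap. It locates the difficulty but never resolves it.

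\textbf{Steps 2--3 do not close.} As you observe, counting only gives $|E-E_2|\ge 3$, and these three edges may form a triangle or a star, so the greedy of Step 2 can fail. Step 3 then only names tools (a swap at a leaf of $F_2$, tight cuts from Lemma~\ref{lem:sub}, rerouting). It never shows that any of them produces three disjoint removable edges.

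\textbf{The fallback fails as stated.} The rerouting $M-xy+ux+vy$ from the proof of Theorem~\ref{thm:matching size}(a) rests on showing that $H-\{u,v\}$ is $k$-edge-connected. That uses two facts:
\begin{itemize}
\item $U$ is independent, which bounds $d(z)$ for $z\in X\cap U$;
\item $\delta\ge k+m=5$. For $X\subseteq W$ with $|X|\le 2$, the estimate $d(z)\le 2+(|X|-1)+(k-1)\le 4$ gives no contradiction when $\delta=4$.
\end{itemize}
In your extremal configuration $G[U]$ actually has edges, since that is how Lemma~\ref{lem:sub} was invoked. So neither fact is available. Also, ``a degree-$4$ vertex losing one edge keeps degree $3\ge 2$'' says nothing about $2$-edge-connectivity. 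The fact used in (a) is that attaching a vertex of degree $\ge k$ to an \emph{already} $k$-edge-connected graph preserves $k$-edge-connectivity.

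\textbf{The missing idea is a reduction that avoids the hard general case.} For $2$-connected $G$ the statement is already known: \cite{li2026halinsedgeremovabilitymatching} gives a $3$-matching whose removal even preserves $2$-connectivity. So you may assume $G$ has a cut vertex $v$. For each component $W$ of $G-v$, the lobe $H=G[W\cup v]$ is $2$-edge-connected, and every vertex of $H$ other than $v$ has degree at least $4$ in $H$. Distinct lobes are edge-disjoint and meet only in $v$. Hence it suffices to find, in each lobe, a removable $2$-matching avoiding $v$; two lobes then give four disjoint edges.

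Inside a lobe the MA argument is short:
\begin{itemize}
\item Order $H$ starting with $v$. Let $v_r$ be the last vertex, let $x,y,z$ be its three rightmost neighbours in this order, and let $w$ be the rightmost left neighbour of $z$.
\item The MA property gives $d^-_H(z)\ge 3$, so $yv_r,zv_r,wz\notin E_2$, and none of these edges meets $v$.
\item If $w\ne y$, take $\{wz,yv_r\}$.
\item Otherwise take $\{yz,xv_r\}$. If $xv_r\in F_2$, note that $v_r$ is a leaf of $F_2$, so $F_2-xv_r+zv_r$ is a maximal forest of $H-E_1$ and Lemma~\ref{lem:swap} applies.
\end{itemize}
Your direct approach, by contrast, would have to handle $2$-connected graphs with $\delta=4$ from scratch. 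That is exactly the case the paper imports rather than reproves.
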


\begin{proof}
	Let $G$ be 2-edge-connected with $\delta(G) \ge 4$. Since the statement holds for 2-connected graphs 
    by \cite{li2026halinsedgeremovabilitymatching}, we may assume $G$ is not 2-connected. Let $v$ be a cut vertex of $G$, and let $W \subseteq V - v$ be the vertex set of a component of $G - v$. Then $H = G[W \cup v]$ is 2-edge-connected. We claim we can find a removable matching of size two in $H$ that avoids $v$.

    Let $r=|V(H)|$. Note that every vertex of $H$, except possibly $v$, has degree at least 4. Thus
    $r\geq 5$. Consider an MA ordering $v, v_2, \ldots, v_r$ of $H$ starting with $v$. 
    Let $x$, $y$, and $z$ be the three rightmost neighbours of $v_n$ in order, and let $w$ be the rightmost left neighbour of $z$. We have $d^-_H (z) \ge 3$ by \eqref{eq:max-back criteron}, so $yv_n, zv_n, wz \notin E_2$. If $w \ne y$, then $\{wz, yv_n\}$ is a removable matching of size two avoiding $v$. Otherwise, we claim $\{yz, xv_n\}$ is our desired removable matching. This is clear if $xv_n \notin E_2$. If $xv_n \in E_2$, then $xv_n \in E(F_2)$ and $F_2 - xv_n + zv_n$ is a maximal forest of $G - E_1$ since $v_n$ is a leaf of $F_2$. Hence $\{yz, xv_n\}$ is removable by Lemma \ref{lem:swap}. This proves the claim.
	
	Since $v$ is a cut vertex, $G - v$ has at least two components. By the above claim we can find a removable matching of size two in each component that avoids $v$. Thus their union is a removable matching of size at least four in $G$. This completes the proof.
\end{proof}

To prove the last result of this section, 
we need one more result of Mader. We say that an edge $e$ of a $k$-connected graph $G$ is \textit{$k$-essential} if $G - e$ is not $k$-connected.

\begin{theorem} \cite{M72}
\label{thm:cycles in k-connected graphs}
	Let $G$ be a $k$-connected graph and $C$ be a cycle consisting of $k$-essential edges in $G$. Then there exists a vertex in $V(C)$ of degree $k$ in $G$.
\end{theorem}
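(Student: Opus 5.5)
We would argue by contradiction, following Mader's classical crossing-separator approach. Suppose every vertex of $C$ has degree at least $k+1$ in $G$. For each edge $e=uv$ of $C$, $G-e$ is not $k$-connected. Hence there is a set $S$ with $|S|=k-1$ and a partition $V-S=A\cup B$ with $A,B\ne\emptyset$, $u\in A$, $v\in B$, such that $uv$ is the only edge of $G$ between $A$ and $B$. Call such a triple $(A,S,B)$ an \emph{$e$-separation}, and $A$ the side containing the chosen endpoint. Among all edges $e$ of $C$ and all $e$-separations, choose one, say for $e=uv$ with side $A\ni u$, for which $|A|$ is minimum. This mirrors the minimality argument in Lemma \ref{lem:sub}, with vertex cuts replacing edge cuts.

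First we show $|A|\geq 2$. Every neighbour of $u$ lies in $A\cup S\cup\{v\}$. If $A=\{u\}$, then $d(u)\le |S|+1=k$, contradicting the degree assumption. So $A-\{u\}\neq\emptyset$, and the set $S\cup\{u\}$ separates $A-\{u\}$ from $B$. Next we use the cycle. Let $uw$ be the other edge of $C$ at $u$. Since $uv$ is the only $A$--$B$ edge, $w\in A\cup S$. Since $uw$ is $k$-essential, it has a $uw$-separation $(A',S',B')$ with $u\in A'$ and $w\in B'$.

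The key step is to cross $(A,S,B)$ with $(A',S',B')$. We would use the standard counting $|S\cap A'|+|S\cap S'|+|S\cap B'|=k-1$, and similarly for $S'$. This shows that one of the "corner" sets, most likely $A\cap A'$ or $A\cap B'$, together with its neighbourhood, yields a separation of some edge of $C$ whose side is a proper subset of $A$. That contradicts minimality. Concretely, we expect $A\cap B'$ to be the right corner when $w\in A$. It contains $w$, it misses $u$, and its boundary is at most $(S\cap B')\cup(S\cap S')\cup(S'\cap A)$, plus the single special edge $uw$. If that boundary has size at most $k-1$ and the complementary corner $B\cap A'$ is nonempty (it contains $v$ unless $v\in S'$), then $A\cap B'$ is a smaller side for the edge $uw$. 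Otherwise the symmetric corner inequality forces $A\cap A'$ to have a small boundary. Then $A\cap A'$ minus the appropriate endpoint gives a smaller separation for $uv$ itself.

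The main obstacle is the degenerate configurations: $w\in S$, $v\in S'$, or one corner being empty. When $w\in S$, we expect to follow the cycle further from $u$ through $S$ back into $A$, or to apply the argument to the $C$-edge at $w$, using that $C$ is a closed walk of essential edges which must re-enter $A$ because $uv$ is the only $A$--$B$ edge. We would handle these cases by the usual parity of the counting inequality together with $k$-connectivity: any nonempty corner whose opposite corner is also nonempty has at least $k$ boundary vertices. The case analysis is finished once every branch either produces a smaller side or bounds $d(u)$ by $k$.
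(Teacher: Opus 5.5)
The paper gives no proof of this statement; it quotes it from Mader's 1972 paper. Your outline therefore has to stand on its own, and as written it does not. The crossing step, which is the whole content of the theorem, is only announced (``most likely'', ``we expect''), and the concrete claims you do make are not right.

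Your setup is sound: $uv$ is the only $A$--$B$ edge, the minimum is over all sides of all separations of edges of $C$, $|A|\ge 2$, and $w\in A\cup S$. Also $v\notin B'$. Put
$\partial_1=(A\cap S')\cup(S\cap S')\cup(S\cap B')$,
$\partial_2=(B\cap S')\cup(S\cap S')\cup(S\cap A')$,
$\partial_3=(A\cap S')\cup(S\cap S')\cup(S\cap A')$,
$\partial_4=(B\cap S')\cup(S\cap S')\cup(S\cap B')$,
so that $|\partial_1|+|\partial_2|=|\partial_3|+|\partial_4|=|S|+|S'|$.

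For $w\in A$ and $|\partial_1|\ge k$, this identity pairs $A\cap B'$ with $B\cap A'$, not with $A\cap A'$. It gives $|\partial_2|\le k-2$, hence $B\cap A'=\emptyset$ (its neighbourhood lies in $\partial_2\cup\{u\}$ and $w$ lies outside), hence $v\in S'$. It says nothing about $\partial_3$. To bound $\partial_3$ you must pair $A\cap A'$ with $B\cap B'$, and this helps only if $B\cap B'\neq\emptyset$. In that case the smaller $uv$-side is $A\cap A'$ itself, with separator $\partial_3\cup\{w\}$ in $G-uv$, not ``$A\cap A'$ minus an endpoint''.

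The remaining configuration $B\subseteq S'$ survives all corner counting and needs a further idea. There $|S\cap B'|\ge |B|+1$, and $N(u)\subseteq((A\cup S)\cap(A'\cup S'))\cup\{v,w\}$ gives $d(u)\le k-2+|A|-|B|$. This is a contradiction only because minimality also gives $|A|\le |B|$.

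The case $w\in S$ is not a degenerate side case, and ``following the cycle further'' is not an argument. This is exactly where $A\cap B'=\emptyset$ can occur, so that no corner of $A$ need be a proper subset with small boundary. If $A\cap B'\neq\emptyset$, $k$-connectivity gives $|\partial_1|\ge k$ and the previous paragraph applies verbatim. So suppose $A\cap B'=\emptyset$.

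\emph{Subcase $A\subseteq A'$.} Then $\{u\}\cup(S\cap A')\cup(S\cap S')$ separates $A-u\neq\emptyset$ from $w$. It has at most $k-1$ vertices because $w\in S\cap B'$, contradicting $k$-connectivity.

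\emph{Subcase $A\not\subseteq A'$.} Then $A\cap S'\neq\emptyset$, so $A\cap A'\subsetneq A$. If $|\partial_3|\le k-2$, then $A\cap A'$, with separator $\partial_3\cup\{w\}$, is a smaller $uv$-side. Hence $|\partial_3|\ge k-1$, so $|\partial_4|\le k-1$. This forces $B\cap B'=\emptyset$, since its neighbourhood lies in $\partial_4$ and misses $u$; that is, $B'\subseteq S$. Now $|A|\ge 1+|A\cap S'|\ge 1+|S\cap B'|=1+|B'|$, which contradicts minimality applied to the side $B'$ of the $uw$-separation.

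So crossing the separations of the two $C$-edges at $u$ does suffice, without using more of the cycle. But the ingredients that make it work are exactly what your proposal omits:
\begin{itemize}
\item comparing $|A|$ with the sides $B$ and $B'$, which are not subsets of $A$;
\item the degree count at $u$.
\end{itemize}
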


\begin{theorem} \label{thm:removable 3-matching}
	Let $G = (V, E)$ be a $k$-connected graph, $k \ge 3$, with $\delta(G) \ge k + 2$. Then $G$ contains a matching $M$ with $|M|=3$ such that $G - M$ is $k$-connected.
\end{theorem}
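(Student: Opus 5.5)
We will work with a maximum adjacency ordering $v_1,\dots,v_n$ of $G$ and the subgraph $G'=G-E_k$. By Lemma \ref{lem:maxback}, every edge of $G'$ is removable, and so is every set of edges of $G'$. The degree bound $\delta(G)\ge k+2$ forces many edges into $G'$. The rightmost vertex $v_n$ has $d_{G'}(v_n)=d^-_{G'}(v_n)\ge 2$. Applying \eqref{eq:max-back criteron} to $v_n$ and the vertices just before it also gives large left degrees further left. The goal is therefore a matching of size three inside $G'$, or failing that inside $G'$ after a swap justified by Lemma \ref{lem:swap}.

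\textbf{Step 1: a removable path $P_4$ at the right end.} As in the proof of Theorem \ref{thm:f'(2,4)}, let $x,y,z$ be the rightmost left neighbours of $v_n$, in order. Since $d(v_n)\ge k+2$, the edges $yv_n$ and $zv_n$ lie in $G'$. By \eqref{eq:max-back criteron}, $z$ has at least $k+2$ left neighbours among its predecessors, up to those lying to the right of $z$. So $z$ also has at least two edges of $G'$ going left, which gives edges $wz$ and $w'z$ in $G'$. Together these produce a removable path or star on about five vertices near the right end, and from it we extract two disjoint removable edges, say $\{yv_n, wz\}$, exactly as in the earlier proof.

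\textbf{Step 2: the third edge.} Let $M_2$ be this removable $2$-matching and $H=G-M_2$, which is $k$-connected. We then look for a removable edge of $H$ that avoids the four vertices of $M_2$. If every edge of $H$ disjoint from $V(M_2)$ were $k$-essential in $H$, we would use Theorem \ref{thm:cycles in k-connected graphs}. In $H$, every vertex outside $V(M_2)$ still has degree at least $k+2$, and a vertex of $V(M_2)$ has degree at least $k+1$. Let $U=V-V(M_2)$. If $H[U]$ contains a cycle, that cycle consists of essential edges none of whose vertices has degree $k$, a contradiction. Hence $H[U]$ is a forest.

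If $|U|$ is not small, a forest on $U$ has a vertex of degree at most $1$ in $H[U]$. Such a vertex has at least $k+1$ neighbours in $V(M_2)$, which has only four vertices, so $k\le 3$ follows. This settles $k\ge 4$. The remaining cases are $k=3$, where every vertex of $U$ adjacent to all of $V(M_2)$ forces very rigid structure, and the case of small $|V|$. For these I would go back to the MA ordering and rechoose $M_2$. The plan is to pick the two removable edges deep in $G'$ using a vertex $u\in U$ that is a leaf of the forest. Then $u$ has $k+1\ge 4$ neighbours in $V(M_2)$, so we swap: replace an edge of $M_2$ by two edges from $u$ and from another leaf, as in the augmentation argument in the proof of Theorem \ref{thm:matching size}(a). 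Removability of the swapped edges is checked via Lemma \ref{lem:swap}, using that $v_n$ is a leaf of $F_k$.

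\textbf{Main obstacle.} I expect the hard part to be the case $k=3$ together with the small-graph cases, where the forest argument does not immediately contradict the degree bound. There the essential edges form a forest whose leaves are fully joined to $V(M_2)$, and one must either exhibit an alternative $2$-matching or show directly that some edge at a leaf is removable. I would first try choosing $M_2$ to minimise the number of essential edges of $G-M_2$ inside $U$, and then derive a contradiction from a leaf adjacent to all four vertices of $V(M_2)$ combined with Theorem \ref{thm:cycles in k-connected graphs} applied to a cycle through that leaf and an edge of $M_2$.
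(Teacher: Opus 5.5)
\textbf{Verdict.} Your proposal has a genuine gap: the case $k=3$ is not proved. For $k\ge 4$ your argument is complete, and there it is simpler than the paper's.

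\textbf{The case $k\ge 4$.} Step 1 works for general $k$. It is the argument from the proof for $2$-edge-connected graphs, with the swap $F_k-xv_n+zv_n$ via Lemma \ref{lem:swap} when $xv_n\in E(F_k)$, and it does yield a removable $2$-matching $M_2$. Then for \emph{any} removable $2$-matching, Theorem \ref{thm:cycles in k-connected graphs} makes $H[U]$ a forest. A vertex of degree at most one in this forest has at least $k+1$ neighbours among the four vertices of $V(M_2)$, forcing $k\le 3$. One small slip: \eqref{eq:max-back criteron} only gives $d^-_G(z)\ge d_G(v_n)-1\ge k+1$, so $z$ has one, not two, left edges in $G'$. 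You only need one.

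\textbf{The gap at $k=3$.} Here the same count only says that each $u\in U$ with a single neighbour in $H[U]$ has degree exactly $5$ and is joined to all of $V(M_2)$. This is no contradiction, because nothing bounds the degrees of the vertices of $V(M_2)$, i.e.\ the number of edges between $U$ and $V(M_2)$. Your suggested repairs do not supply this.
\begin{enumerate}
\item The augmentation in the proof of Theorem \ref{thm:matching size}(a) is an edge-cut count. It uses that $U$ is independent in $G$, and it has no analogue for vertex cuts.
\item Theorem \ref{thm:cycles in k-connected graphs} cannot be applied to a cycle through an edge of $M_2$. Such edges are not in $H$, and they are not $k$-essential in $G$, because $G-M_2$ is $k$-connected.
\item You also do not know that the edges from a leaf to $V(M_2)$ are essential.
\end{enumerate}

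\textbf{The missing idea.} You need to choose the $2$-matching so that the degrees on its vertex set are controlled. The paper does this as follows.
\begin{enumerate}
\item It takes $M'=\{x_1x_2,y_1y_2\}$ with $x_1y_1\in E$: the first and third edges of a longest $v_{n-1}$--$v_n$ path in $G-E_k$, with a swap in the triangle case.
\item It takes a fresh MA ordering of $G-M'$ starting with $x_1,y_1$, and looks at the rightmost vertex $v\notin X=V(M')$. Either $v$ has at least $k+1$ neighbours outside $\{x_2,y_2\}$ and its rightmost neighbour in $V-X$ gives a third edge outside $E_k$; or a swap in $F_k$ via Lemma \ref{lem:swap} gives one; or else $d(x_2)=d(y_2)=k+2$ and $x_2y_2\in E$. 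By symmetry, also $d(x_1)=d(y_1)=k+2$.
\item Then every vertex of $X$ has degree $k+1$ in $H=G-M'$, so $m_{xy}\le 4k$. Combined with $m_y\le n-5$ (your forest observation) and $2m_y+m_{xy}\ge (k+2)(n-4)$, this forces $n\le 7$. Only a few small, nearly complete graphs remain, and these are checked by hand.
\end{enumerate}
Some such degree control is exactly what your Step 2 lacks for $k=3$.
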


\begin{proof}
	We first show we can find a removable matching $M' = \{x_1 x_2, y_1 y_2\}$ such that $x_1 y_1 \in E$. Let $v_1, \ldots, v_n$ be an MA ordering of $G$, and consider a longest $v_{n - 1}$-$v_n$ path $P$ in $G' = G - E_k$. Since $\delta(G) \ge k + 2$, $P$ has at least one internal vertex. If $P$ has two or more internal vertices, we may choose $M'$ to be the first and third edge of $P$. Otherwise, $d(v_n) = d(v_{n - 1}) = k + 2$ and $v_j, v_{n - 1}, v_n$ form a triangle in $G'$ where $v_j$ is the internal vertex of $P$. If $u$ is the neighbour of $v_n$ in $F_k$, then $F_k - uv_n + v_{n - 1} v_n$ is a maximal forest of $G - E_{k - 1}$ by Lemma \ref{lem:swap}. Hence $M' = \{uv_n, v_j v_{n - 1}\}$ is our desired removable matching of size two.
	
	Let $x_1, y_1, u_3, \ldots, u_n$ be an MA ordering of $G - M'$, and let $X = V(M')$. If the rightmost vertex $v \in V - X$ has at least $k + 1$ neighbors in $V - \{x_2, y_2\}$, then we may choose the rightmost neighbor $z$ of $v$ in $V - X$ and set $M = M' \cup zv$. Otherwise, $d(v) = k + 2$ and $v$ is adjacent to both $x_2$ and $y_2$. Let $u$ be the neighbour of $v$ in $F_k$. If $v$ is not the parent of one of $x_2$ or $y_2$ in $F_k$, say $y_2$, then $F_k - uv + vy_2$ is a maximal forest of $G - E_{k - 1}$, so we may set $M = M' \cup uv$
    by Lemma \ref{lem:swap}. Thus, we may assume $d(v) = d(x_2) = d(y_2) = k + 2$, $v$ precedes $x_2$ and $y_2$ in the ordering, and $v, x_2, y_2$ forms a triangle.

Let $H = G - M'$ and $Y = V - X$. Let $m_y = |E(H[Y])|$, $m_x = |E(H[X])|$, and $m_{xy} = |E(X, Y)|$. Suppose there exists no edge $e$ of $H[Y]$ such that $H - e$ is $k$-connected. By symmetry of the earlier argument, we may assume $x_1 y_1, x_2 y_2 \in E(H)$ and $d_H (x_1) = d_H (x_2) = d_H (y_1) = d_H (y_2) = k + 1$. Then $m_x \ge 2$ so
    \begin{equation} \label{eq:mxy}
		m_{xy}
		= \sum_{v \in X} d(v) - 2 m_x
		= 4k + 4 - 2 m_x
		\le 4k
    \end{equation}
	and
    \begin{equation} \label{eq:2my+mxy}
		2 m_y + m_{xy}
		= \sum_{v \in Y} d(v)
		\ge (k + 2) (n - 4) .
    \end{equation}
	By Theorem \ref{thm:cycles in k-connected graphs}, $H[Y]$ must be a forest, so $m_y \le n - 5$. Combining this with \eqref{eq:mxy} and \eqref{eq:2my+mxy} yields $n \le 8 - \frac2k$, and hence $n \le 7$. 
    Hence either $k=4$ and $G=K_7$, or $k=3$ and $G$ can be obtained from $K_7$ be deleting a matching. 
    In the former case every matching of size three is removable. In the latter case
    a simple case analysis shows that $G$ has a matching
    $M$ of size three for which 
$G - M$ contains a 3-connected spanning subgraph isomorphic to $K_{3,4}$.
\end{proof}

\section{Concluding remarks}
\label{sec:conc}

Concerning the algorithmic aspects of removable forests, we recall that
an MA ordering of a graph $G=(V,E)$ and the spanning subgraph $G_k$, for a given $k\geq 1$,
can be found in $O(|V|+|E|)$ time \cite{NIbook}. Thus the algorithm described in the proof of Theorem \ref{thm:Hasunuma conjecture} gives rise to 
polynomial time algorithms for finding the removable trees and forests in Section \ref{sec:removable trees}.

It would be interesting to see whether Theorem \ref{thm:removable forests} extends
to all forests. 
We conjecture that it does.


\begin{conjecture} \label{conj:removable forests}
	Let $G$ be a $k$-(edge-)connected graph, and let $F$ be a forest of order $m$ with $\omega$ connected components. If $\delta(G) \ge k + m - \omega$ and $k \ge \omega - 1$, then $G$ contains a subforest $F' \cong F$ such that $G - E(F')$ is $k$-(edge-)connected.
\end{conjecture}

\section{Acknowledgements}

This work was supported by the National Research, Development and Innovation Office
of Hungary, grant no. Advanced 152786, and the
MTA-ELTE Momentum Matroid Optimization Research Group.

\end{document}